\newtheorem{thm}{Theorem}[section]
\newtheorem{lem}[thm]{Lemma}
\theoremstyle{remark}
\theoremstyle{definition}
\numberwithin{equation}{section}
\numberwithin{thm}{section}
\begin{document}



 \subjclass{42B20, 42B25}



\title[fractional integral operators on non-homogeneous spaces]{Boundedness of fractional integral operators on non-homogeneous metric measure spaces}


%

\author{Rulong Xie and Lisheng Shu}
\address{Rulong Xie, 1. School of Mathematical Sciences, University of
Science and Technology of China, Hefei 230026, China 2. Department
of Mathematics, Chaohu University, Chaohu 238000,
China}\email{xierl@mail.ustc.edu.cn}
\address{ Lisheng Shu, Department of Mathematics, Anhui Normal University,
Wuhu 241000, China} \email{shulsh@mail.ahnu.edu.cn}


%

\thanks{Supported by National Natural Science Foundation of China
(No.10371087) and Natural Science Foundation of Education Committee
of Anhui Province (No.KJ2011A138, No.KJ2012B116). }

%

\begin{abstract}
In this paper, the fractional integral operator on non homogeneous
metric measure spaces is introduced, which contains the classical
fractional integral operator, fractional integral with non-doubling
measures and fractional integral with fractional kernel of order
$\alpha$ and regularity $\epsilon$ introduced by Garc\'{i}a-Cuerva
and Gatto as special cases. And the
$(L^{p}(\mu),L^{q}(\mu))$-boundedness for fractional integral
operators on non-homogeneous metric measure spaces is established.
From this, the $(L^{p}(\mu),L^{q}(\mu))$-boundedness for commutators
and multilinear commutators generated by fractional integral
operators with $RBMO(\mu)$ function are further obtained. These
results in this paper include the corresponding results on both the
homogeneous spaces and non-doubling measure spaces.
\end{abstract}

\maketitle



\section{Introduction}

As we know, the theory on spaces of homogeneous type need to assume
that measure $\mu$ of metric spaces $(X,d,\mu)$ satisfies the
doubling measure condition, i.e. there exists a constant $C>0$ such
that $\mu(B(x,2r))\leq C\mu(B(x,r))$ for all $x\in \rm{supp} \mu$
and $r>0$. Recently, many classical theory have been proved still
valid without the assumption of doubling measure condition, see
[2,4-6,8-10,18,19,21-24] and their references. In case of
non-doubling measures, a Radon measure $\mu$ on $R^{n}$ only need to
satisfy the polynomial growth condition, i.e., for all $x \in R^{n}$
and $r>0$, there exists a constants $C>0$ and $k\in(0, n]$ such
that,
\begin{equation}\
\mu(B(x, r)) \leq C_{0}r^{k},
\end{equation}
where $B(x, r)= \{y \in  R^{n}: |y-x| < r\}$. The analysis on
non-doubling measures has important applications in solving the
long-standing open Painlev$\acute{e}$'s problem (see [21]).

   However, as pointed out by Hyt\"{o}nen in [12], the measure
satisfying (1.1) do not include the doubling measure as special
cases. For this reason, a kind of metric measure spaces $(X,d,\mu)$
satisfying geometrically doubling and the upper doubling measure
condition (see Definition 1.1 and 1.2) is introduced by Hyt\"{o}nen
in [12], which is called non-homogeneous metric measure spaces. The
advantage of this kind of spaces is that it includes both the
homogeneous spaces and metric spaces with polynomial growth measures
as special cases. From then on, some results paralled to homogeneous
spaces and non-doubling measure spaces are obtained (see [1,3,11-17]
and the references therein). For example, Hyt\"{o}nen et al. in [14]
and Bui and Duong in [1] independently introduced the atomic Hardy
space $H^{1}(\mu)$ and obtained that the dual space of $H^{1}(\mu)$
is $RBMO(\mu)$. Hyt\"{o}nen and Martikainen [13] established the
$T_{b}$ theorem in this surroundings. In [1], Bui and Duong also
obtained that Calder\'{o}n-Zygmund operator and commutators of
Calder\'{o}n-Zygmund operators with $RBMO(\mu)$ function are bounded
in $L^{p}(\mu)$ for $1<p<\infty$. Later, Lin and Yang [16]
introduced the space $RBLO(\mu)$ and proved the maximal
Calder\'{o}n-Zygmund operators is bounded from $L^{\infty}(\mu)$
into $RBLO(\mu)$. Recently, some equivalent characterizations are
established by Liu, Yang Da. and Yang Do. in [17] for the
boundedness of Carder¡äon-Zygmund operators on $L^{p}(\mu)$ for
$1<p<\infty$. The boundedness and weak type endpoint estimate of
multilinear commutators of Calder\'{o}n-Zygmund operators on
non-homogeneous metric spaces is established by Fu, Yang and Yuan in
[3]. And weighted estimate for multilinear Calder\'{o}n-Zygmund
operators on non-homogeneous metric spaces is also obtained by Hu,
Meng and Yang in [11].

The purpose of this paper is to establish the theory of fractional
integral operators on non-homogeneous metric measure spaces. At
first, fractional integral operators on non-homogeneous metric
measure spaces is introduced. This kind of fractional integral
operators contains the classical fractional integral operator,
fractional integral with non-doubling measures and fractional
integral with fractional kernel of order $\alpha$ and regularity
$\epsilon$ introduced by Garc\'{i}a-Cuerva and Gatto in [4] as
special cases. The $(L^{p}(\mu),L^{q}(\mu))$-boundedness for
fractional integral operators on non-homogeneous metric measure
spaces is obtained. From this result, the boundedness of commutators
and multilinear commutators generated by fractional integral
operators with $RBMO(\mu)$ function are also established. The
results in this paper include the corresponding results on both the
homogeneous spaces and non-doubling measure spaces.

For the sake of the reader's convenience, let us give some
references about previous results of fractional integral operators
which are closely related to results in this paper. Classical
fractional integral theory can be founded in [20]. In the
circumstance of non-doubling measures,
$(L^{p}(\mu),L^{q}(\mu))$-boundedness for fractional integral
operators is established in [4,5]. The boundedness of commutators
and multilinear commutators generated by fractional integral
operators with $RBMO(\mu)$ function were established in [2] and [8]
respectively. In addition, it's worth mentioning that the
Besicovitch covering lemma is only applicable to $R^{n}$, but is not
applicable to non-homogeneous metric measure spaces. Therefore, in
the process of proving Lemma 2.2 in this paper, we will adopt a
covering lemma introduced in [7].

 Before stating the main results, we firstly recall some notations
and definitions.

\noindent{\bf Definition 1.1.}$^{[12]}$\quad A metric spaces $(X,d)$
is called geometrically doubling if there exist some $N_{0}\in
\mathbb{N}$ such that, for any ball $B(x,r)\subset X$, there exists
a finite ball covering $\{B(x_i,r/2)\}_i$ of $B(x,r)$ such that the
cardinality of this covering is at most $N_0$.

\noindent{\bf Definition 1.2.}$^{[12]}$\quad A metric measure space
$(X,d,\mu)$ is said to be upper doubling if $\mu$ is a Borel measure
on $X$ and there exists a dominating function $\lambda : X
\times(0,+\infty) \rightarrow (0,+\infty)$ and a constant
$C_{\lambda} >0$ such that for each $x\in X, r \longmapsto (x,r)$ is
non-decreasing, and for all $x\in X, r >0$,
 \begin{equation}
 \mu(B(x, r))\leq \lambda(x, r)\leq C_{\lambda}\lambda(x, r/2)
\end{equation}

\noindent{\bf Remark 1.1.}(i)\quad A space of homogeneous type is a
special case of upper doubling spaces, where one can take the
dominating function $\lambda(x, r)\equiv \mu(B(x,r))$. On the other
hand, a metric space $(X,d,\mu)$ satisfying the polynomial growth
condition (1.1) (in particular, $(X,d,\mu)\equiv(R^n, |\cdot|, \mu)$
with $\mu$ satisfying (1.1) for some $k\in (0, n])$) is also an
upper doubling measure space if we take $\lambda(x, r)\equiv
Cr^{k}$.

(ii) \quad Let$(X,d,\mu)$ be an upper doubling space and $\lambda$
be a dominating function on $X \times(0,+\infty)$ as in Definition
1.2. In [15], it was showed that there exists another dominating
function $\widetilde{\lambda}$ such that for all $x, y \in X$ with
$d(x, y)\leq r$,
\begin{equation}
\widetilde{\lambda}(x, r)\leq \widetilde{C}\widetilde{\lambda}(y,
r).
\end{equation}
 Thus, in this paper, we always suppose that $\lambda$ satisfies
(1.3).

\noindent{\bf Definition 1.3.}\quad
 Let $\alpha,\beta \in (1,+\infty)$. A ball $B\subset X$ is called $(\alpha,
\beta)$-doubling if $\mu(\alpha B)\leq \beta \mu (B)$.

As stated in Lemma 2.3 of [1],  there exist plenty of doubling balls
with small radii and with large radii. In the rest of the paper,
unless $\alpha$ and $\beta$ are specified otherwise, by an
$(\alpha,\beta)$ doubling ball we mean a $(6,\beta_{0})$-doubling
with a fixed number $\beta_0 >\max\{C_{\lambda}^{3log_{2}6},
6^{n}\}$, where $n=log_{2}N_{0}$ be viewed as a geometric dimension
of the spaces.

\noindent{\bf Definition 1.4.} Let $0\leq \beta<n$ and $N_{B,Q}$ be
the smallest integer satisfying $6^{N_{B,Q}}r_{B}\geq r_Q$, then we
set
\begin{equation}
K^{(\beta)}_{B,Q} = 1+\sum_{k=1}^{N_{B,Q}}
\biggl[\frac{\mu(6^{k}B)}{\lambda(x_B,6^{k}r_{B})}\biggr]^{1-\beta/n}.
\end{equation}
When $\beta=0$, then we denote $K^{(0)}_{B,Q}$ by $K_{B,Q}$, which
is firstly defined in [22].

\noindent{\bf Definition 1.5.}$^{[1]}$\quad Let $\rho>1$ be some
fixed constant. A function $b\in L_{loc}^{1}(\mu)$ is said to belong
to $RBMO(\mu)$ if there exists a constant $C
>0$ such that for any ball $B$
\begin{equation}
\frac{1}{\mu(\rho B)}\int_{B}|b(x)-m_{\widetilde{B}}b|d\mu(x)\leq C,
\end{equation}
and for any two doubling balls $B\subset Q$,
 \begin{equation}
  |m_{B}(b)-m_{Q}(b)|\leq CK_{B,Q}.
\end{equation}
$\widetilde{B}$ is the smallest $(\alpha,\beta)$-doubling ball of
the form $6^{k}B$ with $k\in {\mathbb{N}}\bigcup\{0\}$, and
$m_{\widetilde{B}}(b)$ is the mean value of $b$ on $\widetilde{B}$,
namely,
$$m_{\widetilde{B}}(b)=\frac{1}{\mu(\widetilde{B})}\int_{\widetilde{B}}b(x)d\mu(x).$$
The minimal constant $C$ appearing in (1.5) and (1.6) is defined to
be the $RBMO(\mu)$ norm of $b$ and denoted by $||b||_{\ast}$. The
norm $||b||_{\ast}$ is independent of $\rho>1$.

Next, let us introduce fracional integral operator on nonhomogeneous
metric measure spaces.

\noindent{\bf Definition 1.6.}\quad Let $0<\alpha <n$ and
$0<\epsilon\leq 1$.  A function $K_{\alpha}(\cdot,\cdot)\in
L_{loc}^{1}(X\times X\backslash\{(x,y):x=y\})$ is said to be a
fractional kernel of order $\alpha$ and regularity $\epsilon$ if it
satisfies the following two conditions:

 (i)\begin{equation}
 \quad |K_{\alpha}(x,y)|\leq
\frac{C}{\biggl[\lambda(x,d(x,y))\biggr]^{1-\alpha/n}}
 \end{equation}
for all $x\neq y$.

 (ii) There exists $0<\epsilon\leq 1$ such that

 \begin{equation}
|K_{\alpha}(x,y)-K_{\alpha}(x',y)|+|K_{\alpha}(y,x)-K_{\alpha}(y,x')|
 \leq \frac{Cd(x,x')^{\epsilon}}{d(x,y)^{\epsilon}\biggl[\lambda(x,d(x,y))\biggr]^{1-\alpha/n}},
\end{equation}
proved that $Cd(x,x')\leq d(x,y)$.

A  operator $I_{\alpha}$ is called a fractional integral operator on
non-homogeneous metric measure spaces with the above fractional
kernel $K_{\alpha}$ satisfying (1.7) and (1.8) if, for $f\in
L^{\infty}$ functions with compact support and $x\notin supp f$,
 \begin{equation}
I_{\alpha}f(x)=\int_{X}K_{\alpha}(x,y)f(y) d\mu(y).
\end{equation}

\noindent{\bf Remark 1.2.}\quad By taking
$\lambda(x,d(x,y))=Cd(x,y)^{n}$, it is easy to see that Definition
1.6 in this paper contains Definition 3.1 and Definition 4.1
introduced by Garc\'{i}a-Cuerva and Gatto in [4]. Obviously, it also
contains the classical fractional integral operator
$$I_{\alpha}f(x)=\int_{R^{d}}\frac{f(y)}{|x-y|^{n-\alpha}}d\mu(y)$$ as special
case.

\noindent{\bf Definition 1.7.}\quad The commutators $[b,I_{\alpha}]$
generated by fractional integral operator $I_{\alpha}$ with
$RBMO(\mu)$ function $b$ is defined by
\begin{equation}[b,I_{\alpha}](f)(x)=b(x)I_{\alpha}(f)(x)-I_{\alpha}(bf)(x).\end{equation}

\noindent{\bf Definition 1.8.}\quad  The multilinear commutators
$I_{\alpha,\vec{b}}$ is formally defined by
 \begin{equation}I_{\alpha,\vec{b}}f(x)=[b_{k},[b_{k-1},\cdot\cdot\cdot,[b_{1},I_{\alpha}]]]f(x)\end{equation}
where $\vec{b}=(b_{1},b_{2},\cdot\cdot\cdot,b_{k})$, and
$$[b_{1},I_{\alpha}]f(x)=b_{1}(x)I_{\alpha}f(x)-I_{\alpha}(b_{1}f)(x).$$

For $1\leq i \leq k$, we denote by $C_{i}^{k}$ the family of all
finite subsets
$\sigma=\{\sigma(1),\sigma(2),\cdot\cdot\cdot,\sigma(i)\}$ of
$\{1,2,\cdot\cdot\cdot,k\}$ with $i$ different elements. For any
$\sigma\in C_{i}^{k}$, the complementary sequences $\sigma'$ is
given by $\sigma'=\{1,2,\cdot\cdot\cdot,k\}\backslash\sigma$. Let
$\vec{b}=(b_{1},b_{2},\cdot\cdot\cdot,b_{k})$ be a finite family of
locally integrable functions. For all $1\leq i\leq k$ and
$\sigma=\{\sigma(1),\cdot\cdot\cdot,\sigma(i)\}\in C_{i}^{k}$, we
set $\vec{b}_{\sigma}=(b_{\sigma(1)},\cdot\cdot\cdot,b_{\sigma(i)})$
and the product $b_{\sigma}=b_{\sigma(1)}\cdot\cdot\cdot
b_{\sigma(i)}$. With this notation, we write
$$||\vec{b}_{\sigma}||_{\ast}=||b_{\sigma(1)}||_{\ast}\cdot\cdot\cdot ||b_{\sigma(i)}||_{\ast}.$$
In particular, for $i\in \{1,\cdot\cdot\cdot,k\}$ and
$\sigma=\{\sigma(1),\cdot\cdot\cdot,\sigma(i)\}\in C_{i}^{k}$,
$$[b(y)-b(z)]_{\sigma}=[b_{\sigma(1)}(y)-b_{\sigma(1)}(z)]\cdot\cdot\cdot[b_{\sigma(i)}(y)-b_{\sigma(i)}(z)],$$
and
$$[b(y)-m_{\widetilde{B}}(b)]_{\sigma}=[b_{\sigma(1)}(y)-m_{\widetilde{B}}(b_{\sigma(1)})]\cdot\cdot\cdot
[b_{\sigma(i)}(y)-m_{\widetilde{B}}(b_{\sigma(i)})],$$ where $B$ is
any ball in $X$ and $y,z\in X$. For the product of all the
functions, we simply write
$$||\vec{b}||_{\ast}=||b_{1}||_{\ast}\cdot\cdot\cdot ||b_{k}||_{\ast}.$$
With any $\sigma\in C_{i}^{k}$, we set
$$I_{\alpha,\vec{b_{\sigma}}}f(x)=[b_{\sigma(i)},[b_{\sigma(i-1)},\cdot\cdot\cdot,[b_{\sigma(1)},I_{\alpha}]]]f(x).$$
In particular, when $\sigma=\{1,\cdot\cdot\cdot,k\}$, we denote
$I_{\alpha,\vec{b_{\sigma}}}$ simply by $I_{\alpha,\vec{b}}$.

The main results of this paper are stated as follows.

\begin{thm}\label{thm-main1.1}
Let $0<\alpha <n$, $1<p<n/\alpha$ and $1/q=1/p-\alpha/n$.
$I_{\alpha}$ is defined by (1.9). Then there exists a constant $C>0$
such that for all $f\in L^{p}(\mu)$,
\begin{equation}
||I_{\alpha}f||_{L^{q}(\mu)}\leq C ||f||_{L^{p}(\mu)}.
\end{equation}

\end{thm}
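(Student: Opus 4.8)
The plan is to follow the classical Hardy--Littlewood--Sobolev strategy adapted to the upper-doubling setting, reducing the $L^p\to L^q$ bound to a pointwise estimate of $I_\alpha f$ by a fractional maximal-type operator together with a suitable Hedberg-type interpolation. First I would introduce, for a fixed ball $B=B(x,r)$ containing $x$ in its center, the decomposition
\begin{equation}
|I_\alpha f(x)|\leq \int_{d(x,y)<r}|K_\alpha(x,y)||f(y)|\,d\mu(y)+\int_{d(x,y)\geq r}|K_\alpha(x,y)||f(y)|\,d\mu(y)=:\mathrm{I}+\mathrm{II}.
\end{equation}
For $\mathrm{I}$ I would split the region $\{d(x,y)<r\}$ into dyadic annuli $\{6^{-k-1}r\leq d(x,y)<6^{-k}r\}$, $k\geq0$, apply the size estimate (1.7), use the monotonicity of $\lambda$ together with (1.2) and (1.3) to compare $\lambda(x,d(x,y))$ with $\lambda(x,6^{-k}r)$ on each annulus, and bound the resulting sum by a geometric series times the fractional average, obtaining $\mathrm{I}\lesssim r^{\alpha}\, M f(x)$ up to the appropriate normalization — more precisely by a constant multiple of $\bigl[\lambda(x,r)\bigr]^{\alpha/n}$ times the (non-homogeneous) Hardy--Littlewood maximal function of $f$ at $x$. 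For $\mathrm{II}$, again by annular decomposition over $\{6^{k}r\leq d(x,y)<6^{k+1}r\}$, the size estimate, and Hölder's inequality with exponents $p$ and $p'$, each annulus contributes at most $C\,\|f\|_{L^p(\mu)}\,\bigl[\lambda(x,6^{k}r)\bigr]^{\alpha/n-1/p}$, and since $\alpha/n-1/p=-1/q<0$ while $\lambda(x,6^kr)\geq C_\lambda^{-k}\lambda(x,r)$ grows geometrically in $k$ (by iterating (1.2)), the series converges and yields $\mathrm{II}\lesssim \bigl[\lambda(x,r)\bigr]^{-1/q}\|f\|_{L^p(\mu)}$.

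Combining the two estimates gives, for every $r>0$,
\begin{equation}
|I_\alpha f(x)|\leq C\Bigl(\bigl[\lambda(x,r)\bigr]^{\alpha/n}\,Mf(x)+\bigl[\lambda(x,r)\bigr]^{-1/q}\,\|f\|_{L^p(\mu)}\Bigr).
\end{equation}
One then optimizes in $r$: because $\lambda(x,\cdot)$ is continuous-like (or at least, by (1.2), takes values dense enough up to the fixed constant $C_\lambda$), one may choose $r=r(x)$ so that $\bigl[\lambda(x,r)\bigr]^{1/q+\alpha/n}\approx \|f\|_{L^p(\mu)}/Mf(x)$, i.e. $\lambda(x,r)^{1/p}\approx\|f\|_{L^p(\mu)}/Mf(x)$, which balances the two terms and produces the Hedberg-type pointwise inequality
\begin{equation}
|I_\alpha f(x)|\leq C\,\bigl(Mf(x)\bigr)^{p/q}\,\|f\|_{L^p(\mu)}^{1-p/q}.
\end{equation}
Here the admissibility of this choice of $r$ needs a short argument: one must know that $\lambda(x,r)$ ranges over a set whose logarithm is $C_\lambda$-dense in $(\,\inf_r\lambda,\ \sup_r\lambda\,)$, and that when $\|f\|_{L^p(\mu)}/Mf(x)$ falls outside that range one can take $r\to0$ or $r\to\infty$ and estimate directly; alternatively one simply picks the best among the countably many radii $6^j$ and absorbs the loss into the constant $C_\lambda$.

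Raising (the third displayed inequality) to the $q$-th power, integrating in $x$, and invoking the $L^p(\mu)$-boundedness of the Hardy--Littlewood maximal operator $M$ on the upper-doubling space $(X,d,\mu)$ — which is available since $(X,d)$ is geometrically doubling (this is where Lemma 2.2 and the covering lemma from [7] alluded to in the introduction enter) — gives
\begin{equation}
\|I_\alpha f\|_{L^q(\mu)}^q\leq C\,\|f\|_{L^p(\mu)}^{q-p}\int_X \bigl(Mf(x)\bigr)^{p}\,d\mu(x)\leq C\,\|f\|_{L^p(\mu)}^{q-p}\,\|f\|_{L^p(\mu)}^{p}=C\,\|f\|_{L^p(\mu)}^{q},
\end{equation}
which is (1.13). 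I expect the main obstacle to be the two places where the upper-doubling structure replaces the usual power $r^n$: first, making the annular size estimates uniform requires carefully exploiting the monotonicity of $\lambda(x,\cdot)$ and the comparability (1.2)--(1.3) rather than a clean homogeneity, and second, the optimization step in $r$ is not literally exact (as it would be with $\lambda=r^n$) and must be handled by the density/continuity remark above or by restricting to $r\in\{6^j\}_{j\in\mathbb Z}$. The regularity condition (1.8) is not needed for Theorem \ref{thm-main1.1} itself — only the size estimate (1.7) is used — so no smoothness of the kernel enters here; (1.8) will be relevant only for the commutator estimates later.
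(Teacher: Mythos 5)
Your overall strategy (a Hedberg-type pointwise bound by the Hardy--Littlewood maximal function, then the $L^{p}(\mu)$-boundedness of that maximal operator) is genuinely different from the paper's, which instead proves the sharp maximal estimate $M^{\sharp,(\alpha)}(I_{\alpha}f)\leq CM^{(\alpha)}_{r,(5)}f$ and combines Lemma 2.1 with the $(L^{p}(\mu),L^{q}(\mu))$ bound for the fractional maximal operator of Lemma 2.2. However, as written your two annular estimates contain a genuine gap: they tacitly use a reverse-doubling (power-like) behaviour of $\lambda(x,\cdot)$ that is not among the hypotheses. For the near part you bound the annulus $\{6^{-k-1}r\leq d(x,y)<6^{-k}r\}$ by $C[\lambda(x,6^{-k}r)]^{\alpha/n}Mf(x)$ and then claim the sum over $k$ is geometric; but monotonicity and (1.2) only give $\lambda(x,6^{-k}r)\leq\lambda(x,r)$, with no decay in $k$, so if for instance $\lambda(x,s)=1+s^{n}$ (admissible, e.g. when $\mu$ has an atom) the series $\sum_{k}[\lambda(x,6^{-k}r)]^{\alpha/n}$ diverges. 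Symmetrically, for the far part you assert that $\lambda(x,6^{k}r)\geq C_{\lambda}^{-k}\lambda(x,r)$ ``grows geometrically''; that inequality is trivially true (monotonicity already gives $\lambda(x,6^{k}r)\geq\lambda(x,r)$) but it is a non sequitur: it yields no growth at all, and $\sum_{k}[\lambda(x,6^{k}r)]^{-1/q}$ need not converge when $\lambda(x,\cdot)$ is bounded, which happens for finite measures. So both halves of your Hedberg estimate break exactly where the homogeneity $\lambda(x,r)=r^{n}$ is replaced by a general dominating function, which is the whole point of the non-homogeneous setting; this is also why the analogous argument in the non-doubling papers works there (the kernel there is controlled by a fixed power $d(x,y)^{\alpha-n}$) but does not transfer verbatim.

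The route can be repaired, but it needs a different decomposition: split the domain of integration according to the level sets $\{y:\lambda(x,d(x,y))\approx 2^{\pm j}\lambda(x,r)\}$ rather than dyadic distance annuli. Each such set is contained in a ball $B(x,\rho_{j})$ with $\mu(B(x,\rho_{j}))\leq\lambda(x,\rho_{j})\leq C2^{\pm j}\lambda(x,r)$ by monotonicity and (1.2), which restores geometric convergence in $j$ both for the near part (against $Mf(x)$) and for the far part (against $\|f\|_{L^{p}(\mu)}$ via H\"older, using $\alpha/n-1/p=-1/q<0$). With that modification, plus an honest treatment of the endpoint cases in the optimization over $r$ (when $(\|f\|_{L^{p}(\mu)}/Mf(x))^{p}$ falls outside the range of $\lambda(x,\cdot)$), your argument does go through, and it would have the merit of using only the size condition (1.7), whereas the paper's sharp-maximal proof also invokes the regularity (1.8). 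As it stands, though, the geometric-series steps are unjustified and the proof is incomplete.
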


\begin{thm}\label{thm-main1.2}
Let $0<\alpha <n$, $1<p<n/\alpha$ and $1/q=1/p-\alpha/n$. Suppose
that $b\in RBMO(\mu)$ and $[b,I_{\alpha}]$ is defined by (1.10).
Then there exists a constant $C>0$ such that for all $f\in
L^{p}(\mu)$,
\begin{equation}
||[b,I_{\alpha}](f)||_{L^{q}(\mu)}\leq C
||b||_{\ast}||f||_{L^{p}(\mu)}.
\end{equation}
\end{thm}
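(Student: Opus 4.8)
The plan is to run the sharp maximal function method on non-homogeneous metric measure spaces, in the spirit of Tolsa and of Garc\'{i}a-Cuerva and Gatto, taking Theorem~\ref{thm-main1.1} as the base estimate. I will use the (doubling) sharp maximal operator $M^{\sharp}$ and the non-centered power maximal operators $M_{r}$ on $(X,d,\mu)$, together with two standard facts: (a) there is $C>0$ so that $\|g\|_{L^{q}(\mu)}\le C\|M^{\sharp}g\|_{L^{q}(\mu)}$ for every $g$ in a suitable dense class (for instance $g\in L^{q_{0}}(\mu)$ for some $q_{0}\in(1,\infty)$); and (b) $M_{r}$ is bounded on $L^{s}(\mu)$ whenever $s>r$. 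Since $1/q=1/p-\alpha/n$ with $1<p<n/\alpha$, we have $q>p>1$, so (b) is applicable with an auxiliary exponent $r$ satisfying $1<r<p$.

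First I would reduce, by truncating $f$ and a density argument, to proving the pointwise bound
\[
M^{\sharp}\bigl([b,I_{\alpha}]f\bigr)(x)\ \le\ C\|b\|_{\ast}\bigl(M_{r}(I_{\alpha}f)(x)+M_{r,\alpha}(f)(x)+I_{\alpha}(|f|)(x)\bigr),
\]
where $M_{r,\alpha}$ is a fractional power maximal operator of order $\alpha$; such an operator maps $L^{p}(\mu)\to L^{q}(\mu)$ (either by pointwise domination by $I_{\alpha}(|f|)$ when $r=1$, or, for $r>1$, by interpolating an $L^{n/\alpha}\to L^{\infty}$ bound with a weak-type bound). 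Granting this, fact~(a), fact~(b) with $s=q$, the $L^{p}(\mu)\to L^{q}(\mu)$ boundedness of $I_{\alpha}$ from Theorem~\ref{thm-main1.1}, and the boundedness of $M_{r,\alpha}$ together give $\|[b,I_{\alpha}]f\|_{L^{q}(\mu)}\le C\|b\|_{\ast}\|f\|_{L^{p}(\mu)}$, which is the assertion of the theorem.

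To establish the pointwise bound, fix a ball $B\ni x$, put $b_{B}=m_{\widetilde{B}}(b)$, and use the identity
\[
[b,I_{\alpha}]f=(b-b_{B})\,I_{\alpha}f-I_{\alpha}\!\bigl((b-b_{B})f\chi_{\frac{6}{5}B}\bigr)-I_{\alpha}\!\bigl((b-b_{B})f\chi_{X\backslash\frac{6}{5}B}\bigr)=:h_{1}-h_{2}-h_{3}.
\]
The mean oscillation of $h_{1}$ over $B$ is controlled by H\"older's inequality and the John--Nirenberg self-improvement of the $RBMO(\mu)$ condition (1.5), giving $C\|b\|_{\ast}M_{r}(I_{\alpha}f)(x)$. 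The term $h_{2}$ is handled by Kolmogorov's inequality and the $L^{p}\to L^{q}$ bound of $I_{\alpha}$ applied to the localized function $(b-b_{B})f\chi_{\frac{6}{5}B}$, again together with the $RBMO(\mu)$/John--Nirenberg estimate for $b$, producing $C\|b\|_{\ast}M_{r,\alpha}(f)(x)$. The far term $h_{3}$ is treated by summing over the annuli $6^{k+1}B\backslash 6^{k}B$: one uses the size condition (1.7) and the $\epsilon$-regularity (1.8) of $K_{\alpha}$, the upper doubling property (1.2) with the comparability (1.3) of $\lambda$, and the estimate $|m_{6^{k}B}(b)-b_{B}|\le C\,k\,\|b\|_{\ast}$; this is exactly where the coefficients $K^{(\beta)}_{B,Q}$ of Definition~1.4 are needed, and one is left with a convergent series in $k$, dominated by $C\|b\|_{\ast}I_{\alpha}(|f|)(x)$. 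Finally, for any two doubling balls $B\subset Q$ with $x\in B$ one bounds $|m_{B}([b,I_{\alpha}]f)-m_{Q}([b,I_{\alpha}]f)|$ by the same splitting, now invoking the second $RBMO(\mu)$ condition (1.6) and a comparison between $K_{B,Q}$ and $K^{(\alpha)}_{B,Q}$ to sum along the chain $B\subseteq 6B\subseteq\cdots\subseteq Q$.

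The main obstacle is the analysis of the far term $h_{3}$ and the companion doubling-ball comparison: one must combine the $\epsilon$-decay of the fractional kernel with the growth rate of $K^{(\beta)}_{B,Q}$ and verify, uniformly in $B$, that the resulting series converges and is dominated by $C\|b\|_{\ast}I_{\alpha}(|f|)(x)$; the bookkeeping between the exponent $1-\alpha/n$ appearing in (1.7)--(1.8) and the definition (1.4) of $K^{(\beta)}_{B,Q}$ is the delicate point, and it is here that the extra factor $k$ coming from $|m_{6^{k}B}(b)-b_{B}|$ has to be absorbed. A secondary technicality is justifying the inequality $\|\cdot\|_{L^{q}(\mu)}\le C\|M^{\sharp}(\cdot)\|_{L^{q}(\mu)}$ for $[b,I_{\alpha}]f$ itself: this requires an a priori finiteness (available for $f$ bounded with compact support by Theorem~\ref{thm-main1.1} and the local integrability of the kernel) followed by a density argument to reach general $f\in L^{p}(\mu)$.
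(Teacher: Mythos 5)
Your proposal is correct and follows essentially the same route as the paper: the sharp maximal estimate $M^{\sharp,(\alpha)}([b,I_{\alpha}]f)\le C\|b\|_{\ast}\bigl(M^{(\alpha)}_{r,(5)}f+M_{r,(6)}(I_{\alpha}f)+I_{\alpha}(|f|)\bigr)$, proved via the same splitting $(b-b_{B})I_{\alpha}f-I_{\alpha}((b-b_{B})f\chi_{\frac{6}{5}B})-I_{\alpha}((b-b_{B})f\chi_{X\setminus\frac{6}{5}B})$ with the kernel regularity, the estimate $|b_{B}-b_{6^{k}\frac{6}{5}B}|\le Ck\|b\|_{\ast}$, the doubling-ball comparison handled through the $K_{B,Q}$, $K^{(\alpha)}_{B,Q}$ coefficients, and then Lemma 2.1, the boundedness of the (fractional) maximal operators, and Theorem 1.1. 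The only slight imprecision is attributing the whole far-term contribution to $I_{\alpha}(|f|)(x)$: the piece carrying $|b-b_{6^{k}\frac{6}{5}B}|$ must be handled by H\"older and the $RBMO$ bound and lands in the fractional maximal term, but since that term already appears in your pointwise estimate this does not affect the argument.
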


\begin{thm}\label{thm-main1.3}
Let $0<\alpha <n$, $1<p<n/\alpha$ and $1/q=1/p-\alpha/n$. Suppose
that $b_{i}\in RBMO(\mu), i=1,2,\cdots,k$ and $I_{\alpha,\vec{b}}$
is defined by (1.11). Then there exists a constant $C>0$ such that
for all $f\in L^{p}(\mu)$,
\begin{equation}
||I_{\alpha,\vec{b}}(f)||_{L^{q}(\mu)}\leq C
||\vec{b}||_{\ast}||f||_{L^{p}(\mu)}.
\end{equation}
\end{thm}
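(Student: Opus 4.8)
The plan is to argue by induction on $k$, the number of functions in $\vec b$, reducing the estimate for $I_{\alpha,\vec b}$ to the already-established Theorems 1.1 and 1.2 together with the sharp maximal function technique adapted to non-homogeneous spaces. For the base cases $k=0$ and $k=1$ we already have Theorems 1.1 and 1.2. For the inductive step, the standard device (as in Pérez--Pradolini and its non-doubling analogue in [8]) is the commutation identity
\begin{equation}
I_{\alpha,\vec b}f = [b_k, I_{\alpha,\vec b_{\sigma_0}}]f - \sum_{j=1}^{k-1}\,\text{(lower-order terms built from }b_k-\text{constants)},
\end{equation}
more precisely, writing $b_j$ shifted by suitable means $m_{\widetilde B}(b_j)$, one expands
\begin{equation}
I_{\alpha,\vec b}f(x)=\sum_{i=0}^{k}\ \sum_{\sigma\in C_i^k}(-1)^{k-i}\,[b(x)-m_{\widetilde B}(b)]_{\sigma'}\,I_{\alpha,\vec b_\sigma}\!\big((b-m_{\widetilde B}(b))_{\sigma'}f\big)(x),
\end{equation}
which holds on each ball $B$. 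The first step is therefore to prove a pointwise sharp-maximal-function estimate: there is a constant $C$ so that for suitable $1<r<\infty$ close to $1$,
\begin{equation}
M^{\sharp}\big(I_{\alpha,\vec b}f\big)(x)\le C\sum_{i=0}^{k-1}\ \sum_{\sigma\in C_i^k}\|\vec b_{\sigma'}\|_{\ast}\Big(M_{r,(6)}\big(I_{\alpha,\vec b_\sigma}f\big)(x)+M_{r\alpha/n}(\text{products})\Big),
\end{equation}
where $M^{\sharp}$ is the $RBMO$-adapted sharp maximal operator of [1] and $M_{r,(\rho)}$, $M_{r\alpha/n}$ are the (non-centered, over doubling balls) powered Hardy--Littlewood and fractional maximal operators on $(X,d,\mu)$.

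To establish this pointwise estimate one fixes a ball $B$ and a doubling ball $\widetilde B \supset B$, splits $f = f\chi_{6B} + f\chi_{X\setminus 6B}$, and estimates the oscillation of $I_{\alpha,\vec b}f$ over $B$ in the two regimes. For the local part $f\chi_{6B}$ one uses the $L^{p_0}\to L^{q_0}$ boundedness of $I_{\alpha,\vec b_\sigma}$ with fewer functions (the induction hypothesis) together with Kolmogorov's inequality and the $RBMO$ John--Nirenberg estimate (Lemma from [1], $\|b\|_{\ast}$ controls $L^s$ oscillations on balls) to bound the mean of $[b-m_{\widetilde B}(b)]_{\sigma'}\,I_{\alpha,\vec b_\sigma}(\dots)$. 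For the part away from $6B$ one exploits the regularity condition (1.8) on $K_\alpha$ and the defining growth (1.7), together with the coefficient estimate: the difference of means of $I_{\alpha,\vec b}f$ over two doubling balls $B\subset Q$ is controlled by $K_{B,Q}^{(0)}$-type sums (using $(1.4)$ with $\beta=0$ and the upper-doubling property $(1.2)$, $(1.3)$), and the key fact that for $b\in RBMO(\mu)$ one has $|m_B(b)-m_{\widetilde B}(b)|\lesssim \|b\|_{\ast}K_{B,\widetilde B}$, so the logarithmic factors $K_{B,Q}$ are absorbed against the geometric decay coming from $\epsilon$ in (1.8) exactly as in the linear case of Theorem 1.2. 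Summation over $\sigma$ and $i$ produces the stated bound on $M^{\sharp}(I_{\alpha,\vec b}f)$.

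With the pointwise sharp estimate in hand, the conclusion follows by the standard iteration: apply the good-$\lambda$ / sharp-maximal $L^q$ inequality on non-homogeneous spaces (the $\|g\|_{L^q(\mu)}\lesssim \|M^{\sharp}g\|_{L^q(\mu)}$ inequality valid once one knows $I_{\alpha,\vec b}f\in L^{q}$ a priori, or, to avoid the a priori issue, truncate the kernel and pass to the limit), then bound $\|M_{r,(6)}(I_{\alpha,\vec b_\sigma}f)\|_{L^q(\mu)}\lesssim \|I_{\alpha,\vec b_\sigma}f\|_{L^q(\mu)}$ by the $L^{q/r}$-boundedness of the maximal operator (choosing $r<\min(q,p)$), and $\|M_{r\alpha/n}(\text{products of }(b-c)_{\sigma'}f)\|_{L^q(\mu)}\lesssim \|\vec b_{\sigma'}\|_{\ast}\|f\|_{L^p(\mu)}$ via the $(L^{p},L^{q})$ fractional-maximal bound plus Hölder and John--Nirenberg. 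Each surviving $\|I_{\alpha,\vec b_\sigma}f\|_{L^q(\mu)}$ with $\sigma\in C_i^k$, $i<k$, is handled by the induction hypothesis, giving $\|\vec b_\sigma\|_{\ast}\|f\|_{L^p(\mu)}$; multiplying by $\|\vec b_{\sigma'}\|_{\ast}$ and summing reconstitutes $\|\vec b\|_{\ast}\|f\|_{L^p(\mu)}$, which is (1.16).

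The main obstacle I anticipate is the "away from $6B$" estimate: one must simultaneously (a) control the many cross terms $[b-m_{\widetilde B}(b)]_{\sigma'}$, whose $L^s$-norms over $6^{k}B$ grow like $K_{B,6^kB}^{|\sigma'|}$, and (b) show these polynomial-in-$K$ factors are still summable against the decay factor $d(x,x')^{\epsilon}/d(x,y)^{\epsilon}$ from (1.8) and the upper-doubling decay of $\lambda$. This requires a careful bookkeeping lemma of the type "$\sum_{k} k^{m}\,[\mu(6^kB)/\lambda(x_B,6^kr_B)]^{1-\alpha/n}\lesssim$ (a single $K$-power)'' — the analogue, in the $\alpha>0$ fractional setting on non-homogeneous spaces, of the coefficient manipulations in [1,3,8]. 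Once that lemma is isolated, the rest is the routine induction-plus-sharp-maximal machinery sketched above.
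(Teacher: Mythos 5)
Your proposal is correct in outline and follows essentially the same route as the paper: induction on $k$, the expansion of $\prod_i[b_i-m_{\widetilde B}(b_i)]$ over subsets $\sigma\in C_i^k$, a pointwise estimate bounding the ($\alpha$-adapted) sharp maximal function of $I_{\alpha,\vec b}f$ by $\|\vec b\|_{\ast}\{M_{r,(6)}(I_\alpha f)+M^{(\alpha)}_{r,(5)}f\}$ plus $\sum_\sigma\|\vec b_\sigma\|_{\ast}M_{r,(6)}(I_{\alpha,\vec b_{\sigma'}}f)$, and then Lemmas 2.1--2.2 together with Theorems 1.1--1.2 and the induction hypothesis to conclude in $L^q(\mu)$. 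The only cosmetic differences are that the paper disposes of the a priori integrability issue by assuming $b_i\in L^\infty(\mu)$ (via Lemma 3.3 of [22]) rather than truncating the kernel, and the coefficient bookkeeping you flag is handled there by Lemmas 2.3--2.5 and 4.2 with the summation $\sum_j j^{k-i}6^{-j\epsilon}<\infty$.
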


 Throughout this paper,  $\chi_{E}$
denotes the characteristic function of set $E$. $C$ always denotes a
positive constant independent of the main parameters involved, but
it may be different from line to line. And $p'$ is the conjugate
index of $p$, namely, $\dfrac{1}{p}+\dfrac{1}{p'}=1.$

\section{Proof of Theorem 1.1}

To prove Theorem 1.1, we need to give the following notations and
lemmas.

Let $0\leq \beta <n$ and $f \in L_{loc}^{1}(\mu)$, the sharp maximal
operator is defined by
 \begin{equation}
 \begin{split}
M^{\sharp,(\beta)}f(x)&=\sup _{B\ni
x}\frac{1}{\mu(6B)}\int_{B}|f(y)-m_{\widetilde{B}}(f)|d\mu(y)\\
&\quad
+\sup_{(B,Q)\in\Delta_{x}}\frac{|m_{B}(f)-m_{Q}(f)|}{K^{(\beta)}_{B,Q}},
\end{split}
\end{equation}
where $\Delta_{x}:=\{(B,Q):x\in B\subset Q\ \text{and $B, Q$ are two
doubling balls}\}$ and the non centered doubling maximal operator is
denoted by
$$Nf(x)=\sup_{B\ni x,\ B \
doubling}\frac{1}{\mu(B)}\int_{B}|f(y)|d\mu(y).$$ By the Lebesgue
differentiation theorem, it is easy to see that for any $f\in
L_{loc}^{1}(\mu)$ and almost every $x\in X$,
 \begin{equation}|f(x)|\leq Nf(x).\end{equation}
Moreover, $N$ is of weak type (1,1) and bounded on $L^{p}(\mu),
1<p\leq+\infty$.

From Theorem 4.2 in [1] or Lemma 4 in [2], it is easy to obtain that
\begin{lem}
Let $0\leq \beta <n$ and $f \in L^{1}_{loc}(\mu)$ with
$\int_{X}f(x)d\mu(x)=0$ if $||\mu||<\infty$. For $1<p<\infty$, if
$\inf(1,Nf)\in L^{p}(\mu)$, then there exists a constant $C>0$ such
that
\begin{equation}
||N(f)||_{L^{p}(\mu)}\leq C||M^{\sharp,(\beta)}(f)||_{L^{p}(\mu)}.
\end{equation}
\end{lem}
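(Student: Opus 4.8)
The plan is to follow the by now standard route to sharp maximal function estimates in the non-doubling and non-homogeneous settings, essentially the argument of Tolsa as adapted by Bui and Duong in Theorem 4.2 of [1] (see also Lemma 4 of [2]), being careful that only the geometric doubling property and the upper doubling condition (1.2)--(1.3) enter, since the Besicovitch covering lemma is not available here. Throughout one uses that by (2.2) $|f|\le Nf$ $\mu$-almost everywhere, that $N$ is of weak type $(1,1)$ and bounded on $L^{r}(\mu)$ for all $1<r\le\infty$, and that doubling balls of both arbitrarily small and arbitrarily large radii are abundant (Lemma 2.3 of [1]).

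First I would reduce the $L^{p}$ inequality to a good-$\lambda$ inequality comparing the distribution functions of $Nf$ and $M^{\sharp,(\beta)}f$. The a priori hypothesis $\inf(1,Nf)\in L^{p}(\mu)$, together with $\int_{X}f\,d\mu=0$ when $\|\mu\|<\infty$ (which excludes the additive constants, on which $M^{\sharp,(\beta)}$ vanishes while $N$ does not), guarantees that $Nf<\infty$ $\mu$-a.e.\ and legitimizes a truncation and limiting argument. For $\lambda$ in the relevant range the level set $\Omega_{\lambda}=\{x:Nf(x)>\lambda\}$ is open and of finite measure, and I would decompose it by a Whitney-type, stopping-time construction built from the elementary covering lemma valid in geometrically doubling metric spaces, producing a family of balls with bounded overlap.

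The heart of the matter is a good-$\lambda$ inequality which, up to harmless dilations of the level sets on the right-hand side, takes the form
\begin{equation*}
\mu\bigl(\{x:Nf(x)>A\lambda,\ M^{\sharp,(\beta)}f(x)\le\delta\lambda\}\bigr)\le C\,\delta^{\eta}\,\mu\bigl(\{x:Nf(x)>\lambda\}\bigr)
\end{equation*}
for a fixed $A>1$, all small $\delta>0$, and some $\eta>0$ depending only on the space. To prove it one fixes a ball $B$ from the decomposition of $\{Nf>\lambda\}$ and, on the portion of $B$ where $M^{\sharp,(\beta)}f\le\delta\lambda$, compares $Nf$ with the average of $|f|$ over a suitable dilate of $B$: passing from a small doubling ball up to a doubling ball $6^{N_{B,Q}}B$ large enough to ``see'' the level $\lambda$ is carried out by telescoping along the chain $\{6^{k}B\}_{k}$, each step contributing either an oscillation term $\frac{1}{\mu(6B)}\int_{B}|f-m_{\widetilde B}(f)|\,d\mu$ or a difference $|m_{B_{k}}(f)-m_{B_{k+1}}(f)|$, and the accumulated error is exactly absorbed by the coefficient $K^{(\beta)}_{B,Q}$ of (1.4) occurring in the second term of $M^{\sharp,(\beta)}$. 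Multiplying by $\lambda^{p-1}$, integrating in $\lambda$, and choosing $\delta$ small enough to absorb $C\delta^{\eta}\|Nf\|_{L^{p}(\mu)}^{p}$ into the left-hand side then gives (2.3).

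I expect the main obstacle to be precisely this control of the long chains of doubling balls. One has to verify that the normalized dominating function from (1.3) interacts with the definition (1.4) of $K^{(\beta)}_{B,Q}$ so that the telescoping sum of oscillations is dominated, uniformly along the chain, by $M^{\sharp,(\beta)}f(x)\cdot K^{(\beta)}_{B,Q}$, and that sufficiently many doubling balls exist at every scale for the stopping-time construction to close; these are exactly the points at which the geometric doubling and upper doubling hypotheses, rather than a polynomial growth bound, are used. Modulo these verifications the argument is formally the same as in the Euclidean non-doubling case of [2].
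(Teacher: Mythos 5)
The paper gives no independent proof of this lemma: it simply invokes Theorem 4.2 of [1] (or Lemma 4 of [2]), whose underlying argument is precisely the good-$\lambda$ comparison of $Nf$ with $M^{\sharp,(\beta)}f$ that you outline, with a Whitney/Vitali-type covering replacing Besicovitch and the coefficients $K^{(\beta)}_{B,Q}$ handled through the properties recorded in Lemmas 2.3--2.5. So your proposal is correct and follows essentially the same route as the paper's (cited) proof, in fact supplying more detail than the paper itself does.
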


\begin{lem}
Let $\eta\geq 5$, $0\leq \beta <n/p$, $r<p<n/\beta$ and
$1/q=1/p-\beta/n$. Then
\begin{equation}
||M_{r,(\eta)}^{(\beta)}f||_{L^{q}(\mu)}\leq C||f||_{L^{p}(\mu)},
\end{equation} where the non-centered maximal operator
$M_{r,(\eta)}^{(\beta)}$ is defined by
\begin{equation}
 M_{r,(\eta)}^{(\beta)}f(x)=\sup_{B\ni x}\biggl\{\frac{1}{\mu(\eta
B)^{1-\beta r/n}}\int_{B}|f(y)|^{r}d\mu(y)\biggr\}^{1/r}.
\end{equation}
\end{lem}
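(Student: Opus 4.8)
The plan is to reduce the statement to the already classical boundedness of the (fractional) Hardy–Littlewood maximal operator on upper doubling spaces, obtained via a Besicovitch-type covering argument. First I would dispose of the exponent $r$: since $r<p$, writing $g=|f|^r$ we have $g\in L^{p/r}(\mu)$ and
\[
M^{(\beta)}_{r,(\eta)}f(x)=\Bigl(M^{(\beta r)}_{1,(\eta)}g(x)\Bigr)^{1/r},
\]
so it suffices to prove the case $r=1$ with $\beta$ replaced by $\beta r\in[0,n)$; note that $1/(q/r)=1/(p/r)-(\beta r)/n$, so the exponent relation is preserved. Thus I would reduce to showing, for $0\le\gamma<n$, $1<s<n/\gamma$, $1/t=1/s-\gamma/n$, that
\[
\Bigl\|\sup_{B\ni x}\frac{1}{\mu(\eta B)^{1-\gamma/n}}\int_B|g|\,d\mu\Bigr\|_{L^t(\mu)}\le C\|g\|_{L^s(\mu)}.
\]

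For this I would first establish the weak endpoint estimate at $s=1$ (so $t=n/(n-\gamma)$) and then interpolate with the trivial $L^\infty$ bound that is available in the range by a Marcinkiewicz-type argument, or else argue directly with a good-$\lambda$ / distributional inequality. The weak $(1,t)$ bound is the crux: fix $\lambda>0$ and let $\Omega_\lambda=\{x:M^{(\gamma)}_{1,(\eta)}g(x)>\lambda\}$. For each $x\in\Omega_\lambda$ choose a ball $B_x\ni x$ with $\mu(\eta B_x)^{1-\gamma/n}<\lambda^{-1}\int_{B_x}|g|\,d\mu$. Since the paper explicitly points out that Besicovitch is unavailable here, I would instead invoke the basic covering lemma from [7] (the reference used for Lemma 2.2) together with the geometrically doubling property (Definition 1.1): from the family $\{B_x\}_{x\in\Omega_\lambda}$ one extracts a countable subfamily $\{B_j\}$ that covers $\Omega_\lambda$ with bounded overlap, or at least such that the dilated balls $\{\eta B_j\}$ have overlap controlled by a constant depending only on $N_0$ and $\eta$. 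Combining this with the upper doubling condition (1.2)–(1.3), which lets us pass between $\mu(\eta B_j)$ and $\lambda(x_{B_j},\eta r_{B_j})$ and compare $\lambda$ at nearby centers, one gets
\[
\mu(\Omega_\lambda)\le\sum_j\mu(\eta B_j)\le\sum_j\Bigl(\lambda^{-1}\int_{B_j}|g|\,d\mu\Bigr)^{t}\cdot\mu(\eta B_j)^{1-(1-\gamma/n)t}
\]
and, since $(1-\gamma/n)t=t/s\cdot$... — more cleanly, using $\mu(\eta B_j)^{1-\gamma/n}<\lambda^{-1}\int_{B_j}|g|$ one bounds $\mu(\eta B_j)\le(\lambda^{-1}\int_{B_j}|g|)^{n/(n-\gamma)}$ and sums with the bounded-overlap property to obtain $\mu(\Omega_\lambda)\le C\lambda^{-n/(n-\gamma)}\|g\|_1^{n/(n-\gamma)}$, i.e.\ the weak $(1,n/(n-\gamma))$ estimate.

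With the weak endpoint in hand, the strong $L^s\to L^t$ bound for $1<s<n/\gamma$ follows by Marcinkiewicz interpolation between the weak $(1,n/(n-\gamma))$ estimate and the trivial estimate $M^{(\gamma)}_{1,(\eta)}g\le\|g\|_\infty\cdot\sup_B\mu(\eta B)^{\gamma/n-1}\cdot(\ldots)$ — more precisely one interpolates the sublinear operator $g\mapsto M^{(\gamma)}_{1,(\eta)}g$ between weak type $(1,n/(n-\gamma))$ and weak type $(n/\gamma,\infty)$, both of which hold by the covering argument; the intermediate exponents are exactly those with $1/t=1/s-\gamma/n$. Undoing the substitution $g=|f|^r$, $\gamma=\beta r$ then yields (2.6) in the stated range $r<p<n/\beta$. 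I expect the main obstacle to be the covering step: replacing Besicovitch with the covering lemma of [7] while still extracting a subfamily whose $\eta$-dilates have overlap bounded purely in terms of the geometric-doubling constant $N_0$ (and $\eta$), and then correctly threading the upper doubling / dominating-function estimates (1.2)–(1.3) so that $\mu(\eta B_j)$ may be compared across the family. Everything after that is the routine Hedberg-type / Marcinkiewicz bookkeeping.
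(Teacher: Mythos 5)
Your proposal is correct and follows essentially the same route as the paper: a weak-type endpoint estimate obtained from the covering lemma of [7] (the Vitali-type $5r$-covering lemma, which yields a \emph{pairwise disjoint} subfamily $\{B_j\}$ whose dilates $5B_j\subset\eta B_j$ cover the level set --- this is exactly where $\eta\geq 5$ enters and what makes $\sum_j\int_{B_j}|g|\,d\mu\leq\|g\|_{L^1(\mu)}$ work, rather than any bounded-overlap property of the $\eta B_j$ or a comparison of $\mu(\eta B_j)$ via the dominating function, neither of which is needed), followed by Marcinkiewicz interpolation. The only cosmetic differences are that you first reduce to $r=1$ by setting $g=|f|^r$, $\gamma=\beta r$, and interpolate against the trivial $(n/\gamma,\infty)$ endpoint given by H\"older, whereas the paper keeps $r$ and interpolates between two weak estimates, at $r$ and at some $s\in(p,n/\beta)$, the latter coming from the same covering argument together with the pointwise domination $M^{(\beta)}_{r,(\eta)}f\leq M^{(\beta)}_{s,(\eta)}f$.
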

\noindent{\bf Remark 2.1.}\quad If $\beta=0$, we denote
$M_{r,(\eta)}^{(\beta)}$ simply by $M_{r,(\eta)}$. From [1], we
obtain that if $\eta\geq 5$ and $q>r$, then
\begin{equation}
||M_{r,(\eta)}f||_{L^{q}(\mu)}\leq C||f||_{L^{q}(\mu)}.
\end{equation}

\begin{proof} (of Lemma 2.2) 
Let us firstly prove the following result.
\begin{equation}
\mu(\{x: M_{r,(\eta)}^{(\beta)}f(x)>\lambda\})\leq
(\frac{C}{\lambda} ||f||_{L^{r}(\mu)})^{nr/(n-\beta r)}.
\end{equation}

Let $E_{\lambda}=\{x:M_{r,(\eta)}^{(\beta)}f(x)>\lambda\}$. By the
definition of $M_{r,(\eta)}^{(\beta)}$, for any $x\in E_{\lambda}$,
there exists a ball $B_{x}$ containing $x$ such that
\begin{equation}
\frac{1}{\mu(\eta B_{x})^{1-\beta
r/n}}\int_{B_{x}}|f(x)|^{r}d\mu(x)\geq \lambda^{r}.
\end{equation}

Note that for $\eta\geq 5$. By Theorem 1.2 in [7], we can pick a
disjoint collection $\{B_{x_{i}}\}$ with $x_{i}\in E_{\lambda}$ and
$E_{\lambda}\subset\bigcup_{x\in
E_{\lambda}}B_{x}\subset\bigcup_{i}5B_{x_{i}}\subset\bigcup_{i}\eta
B_{x_{i}}$. Let $q=\frac{nr}{n-\beta r}$ and $r/q<1$, then
\begin{equation}
\mu(E_{\lambda})^{r/q}\leq \mu (\bigcup_{i}5B_{x_{i}})^{r/q}\leq \mu
(\bigcup_{i}\eta B_{x_{i}})^{r/q}\leq \sum_{i}\mu(\eta
B_{x_{i}})^{r/q}.
\end{equation}
Since $r/q=1-\beta r/n$, then
\begin{equation}
\sum_{i}\mu(\eta B_{x_{i}})^{r/q}\leq
\frac{1}{\lambda^{r}}\int_{X}|f|^{r}(\sum_{i}\chi_{B_{x_{i}}})d\mu.
\end{equation}
Therefore,
\begin{equation}\mu(E_{\lambda})\leq
\frac{C}{\lambda^{q}}||f||_{L^{r}(\mu)}^{q}.
\end{equation}

If $r <s< n/\beta$,
 using the H\"{o}lder's inequality, we deduce
\begin{equation}
 M_{r,(\eta)}^{(\beta)}f(x)\leq {
 M}_{s,(\eta)}^{(\beta)}f(x).
 \end{equation}
By the preceding arguments, then we obtain
\begin{equation}
\mu(E_{\lambda})\leq\mu(\{x:{
 M}_{s,(\eta)}^{(\beta)}f(x)>\lambda\})\leq
(\frac{C}{\lambda}||f||_{L^{s}(\mu)})^{ns/(n-\beta s)}.
\end{equation}
 By the Marcinkiewicz interpolation
theorem,  the proof of Lemma 2.2 is completed.
\end{proof}

With the similar method to proof of Lemma 3 in [2] or Lemma 2.1 in
[22], it is easy to obtain the following Lemma 2.3. Here we omit the
details.
\begin{lem}
For $0\leq \beta<n$, we have the following properties:

 (1) If
$B\subset Q\subset R$ are balls in $X$, then $K^{(\beta)}_{B,Q}\leq
K^{(\beta)}_{B,R}$, $K^{(\beta)}_{Q,R}\leq K^{(\beta)}_{B,R}$ and
$K^{(\beta)}_{B,R}\leq C(K^{(\beta)}_{B,Q}+K^{(\beta)}_{Q,R})$

(2) If $B\subset Q$ have comparable sizes, then
$K^{(\beta)}_{B,Q}\leq C$.

(3) If N is a positive integer and the balls $6B, 6^{2}B, \cdots,
6^{N-1}B$ are non doubling balls, then $K^{(\beta)}_{B,6^{N}B}\leq
C$.
\end{lem}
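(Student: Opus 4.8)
The plan is to prove each of the three properties directly from the definition (1.4) of $K^{(\beta)}_{B,Q}$, exploiting the monotonicity of the dominating function $\lambda$ in its second variable and the condition (1.3), in exact parallel to the treatment of the coefficients $K_{B,Q}$ in [22, Lemma 2.1] and [2, Lemma 3]; the only modification is that the exponent $1$ on each summand $\mu(6^{k}B)/\lambda(x_{B},6^{k}r_{B})$ is replaced by $1-\beta/n \in (0,1]$, and this change is harmless throughout since $t \mapsto t^{1-\beta/n}$ is increasing on $(0,\infty)$. First I would record the elementary estimate that if $B = B(x_{B}, r_{B})$ and $Q = B(x_{Q}, r_{Q})$ with $B \subset Q$, then $r_{B} \lesssim r_{Q}$ and $d(x_{B}, x_{Q}) \leq r_{Q}$, so for each relevant $k$ the ball $6^{k}B$ and a corresponding ball centered at $x_{Q}$ of comparable radius are nested with comparable radii; combined with (1.3) this lets me pass freely between $\lambda(x_{B}, 6^{k}r_{B})$ and $\lambda(x_{Q}, 6^{k}r_{B})$ up to a constant factor. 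This is the routine bookkeeping that makes the three assertions essentially formal.

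For property (1): the inequalities $K^{(\beta)}_{B,Q} \leq K^{(\beta)}_{B,R}$ and $K^{(\beta)}_{Q,R} \leq K^{(\beta)}_{B,R}$ follow because $B \subset Q \subset R$ forces $N_{B,Q} \leq N_{B,R}$ (more balls $6^{k}B$ fit inside $R$ than inside $Q$), so the sum defining $K^{(\beta)}_{B,Q}$ is a subsum of that defining $K^{(\beta)}_{B,R}$; for $K^{(\beta)}_{Q,R} \leq K^{(\beta)}_{B,R}$ one reindexes the $6^{k}Q$-sum, noting that each $6^{k}Q$ is contained in $6^{j}B$ for $j = k + m$ with $6^{m}r_{B} \gtrsim r_{Q}$, and then uses the monotonicity of $\lambda$ together with (1.3) to dominate $\mu(6^{k}Q)/\lambda(x_{Q},6^{k}r_{Q})$ by a constant times $\mu(6^{j}B)/\lambda(x_{B},6^{j}r_{B})$. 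For the subadditivity $K^{(\beta)}_{B,R} \leq C(K^{(\beta)}_{B,Q} + K^{(\beta)}_{Q,R})$ I would split the sum over $1 \leq k \leq N_{B,R}$ at the index $N_{B,Q}$: the low-range part is exactly $K^{(\beta)}_{B,Q}$ up to the leading $1$, and for the high range $N_{B,Q} < k \leq N_{B,R}$ one shifts the summation index so that $6^{k}B$ becomes comparable to $6^{k-N_{B,Q}}Q$ and the summand is controlled, after an application of (1.3), by the corresponding summand of $K^{(\beta)}_{Q,R}$; summing gives the claim.

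Property (2): if $B \subset Q$ have comparable sizes then $r_{Q} \leq 6^{N}r_{B}$ for a fixed $N$ independent of $B,Q$, so $N_{B,Q} \leq N$ and the sum in (1.4) has at most $N$ terms, each bounded by $[\mu(6^{k}B)/\lambda(x_{B},6^{k}r_{B})]^{1-\beta/n} \leq 1$ since $\mu(6^{k}B) \leq \lambda(x_{B},6^{k}r_{B})$ by the upper doubling condition (1.2); hence $K^{(\beta)}_{B,Q} \leq 1 + N = C$. Property (3): when $6B, 6^{2}B, \dots, 6^{N-1}B$ are all non-doubling, then by the definition of non-doubling ball $\mu(6^{k+1}B) > \beta_{0}\,\mu(6^{k}B)$ fails to be used and instead we have $\mu(6 \cdot 6^{k}B) > \beta_{0} \mu(6^{k}B)$ is false, i.e. these balls grow slowly; more precisely one uses that a non-doubling ball satisfies $\mu(6^{k}B) \leq \beta_{0}^{-1}\mu(6^{k+1}B)$, which together with (1.2) and the geometric dimension bound $\beta_{0} > 6^{n}$ forces the ratios $\mu(6^{k}B)/\lambda(x_{B},6^{k}r_{B})$ to decay geometrically in $k$ over the range in question, so the sum over $k = 1, \dots, N$ is a convergent geometric series bounded by an absolute constant; thus $K^{(\beta)}_{B,6^{N}B} \leq C$. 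I expect no genuine obstacle here — the main point requiring a little care is keeping the constant in (3) independent of $N$, which is exactly where the choice $\beta_{0} > 6^{n}$ enters to give summable geometric decay; everything else is the standard reindexing already carried out in [22] and [2], so I would present it compactly and refer to those sources for the identical computations.
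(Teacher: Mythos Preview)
Your approach matches the paper's: the authors simply state that Lemma 2.3 follows ``with the similar method to proof of Lemma 3 in [2] or Lemma 2.1 in [22]'' and omit all details, so a sketch transplanting those arguments with the exponent $1-\beta/n$ inserted is exactly what is called for, and your treatment of (1) and (2) is fine.

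Your discussion of (3), however, contains a slip and invokes the wrong constant. You write that for a non-doubling ball ``$\mu(6\cdot 6^{k}B) > \beta_{0}\mu(6^{k}B)$ is false'' and that the balls ``grow slowly''; this is backwards --- $6^{k}B$ being non-$(6,\beta_{0})$-doubling means precisely that $\mu(6^{k+1}B) > \beta_{0}\mu(6^{k}B)$, i.e.\ the measure grows \emph{fast} along the chain. You do then land on the correct consequence $\mu(6^{k}B) \leq \beta_{0}^{-1}\mu(6^{k+1}B)$, so the computation survives, but the sentence should be rewritten. More substantively, the constant that forces geometric decay is not $\beta_{0} > 6^{n}$. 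The denominator in (1.4) is $\lambda(x_{B},6^{k}r_{B})$, and (1.2) only gives $\lambda(x_{B},6^{k+1}r_{B}) \leq C_{\lambda}^{\log_{2}6}\lambda(x_{B},6^{k}r_{B})$; combining this with $\mu(6^{k}B) \leq \beta_{0}^{-1}\mu(6^{k+1}B)$ shows the ratio $\mu(6^{k}B)/\lambda(x_{B},6^{k}r_{B})$ shrinks by a factor $\leq C_{\lambda}^{\log_{2}6}/\beta_{0}$ per step, and it is the standing assumption $\beta_{0} > C_{\lambda}^{3\log_{2}6}$ (not $\beta_{0} > 6^{n}$) that makes this less than $1$ and the sum bounded independently of $N$. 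With these two corrections your argument for (3) is complete.
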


Lemma 2.4 and Lemma 2.5 can be obtained by analogue to Lemma 3.11
and Lemma 3.12 in [3] or Lemma 5 and Lemma 6 in [2] respectively.
Here we omit the details of proof.
\begin{lem}
For $0\leq \beta<n$, there exists a positive constant $P_{\beta}$
(big enough), depending on $\beta$, $n$ and $C_{\lambda}$, such that
if $B_{1}\subset B_{2} \subset \cdots \subset B_{m}$ are concentric
balls with $K^{(\beta)}_{B_{i},B_{i+1}}>P_{\beta}$ for $i=1, 2,
\cdots, m-1$, then there exists a positive constant $C$, depending
on $\beta$, $n$ and $C_{\lambda}$, such that
$\sum_{i=1}^{m-1}K^{(\beta)}_{B_{i},B_{i+1}}\leq C
K^{(\beta)}_{B_{1},B_{m}}$.
\end{lem}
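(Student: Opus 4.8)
The plan is to argue directly from the defining sum in Definition 1.4, using crucially that the balls are concentric. Write $x_{0}$ for their common center and $r_{i}$ for the radius of $B_{i}$, so $r_{1}\le r_{2}\le\cdots\le r_{m}$, and put $N_{i}=N_{B_{i},B_{i+1}}$, $N=N_{B_{1},B_{m}}$. By (1.4),
$$K^{(\beta)}_{B_{i},B_{i+1}}=1+S_{i},\qquad S_{i}:=\sum_{k=1}^{N_{i}}\biggl[\frac{\mu(6^{k}B_{i})}{\lambda(x_{0},6^{k}r_{i})}\biggr]^{1-\beta/n},$$
and likewise $K^{(\beta)}_{B_{1},B_{m}}=1+S$ with $S=\sum_{k=1}^{N}[\mu(6^{k}B_{1})/\lambda(x_{0},6^{k}r_{1})]^{1-\beta/n}$. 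Since the asserted inequality reads $\sum_{i=1}^{m-1}(1+S_{i})\le C\,K^{(\beta)}_{B_{1},B_{m}}$, I would split the proof into (a) a near-superadditivity estimate $\sum_{i=1}^{m-1}S_{i}\le C\,K^{(\beta)}_{B_{1},B_{m}}$ for the \emph{sum parts}, and (b) an absorption $m-1\le C\,K^{(\beta)}_{B_{1},B_{m}}$ of the remaining constants, the largeness of $P_{\beta}$ being needed only for (b).

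For (a), observe that by definition of $N_{i}$ one has $6^{N_{i}-1}r_{i}<r_{i+1}\le 6^{N_{i}}r_{i}$; so, after passing to $\log_{6}$-scales normalized by $r_{1}$, the scales $6^{k}r_{i}$ occurring in the $S_{i}$'s ($1\le i\le m-1$, $1\le k\le N_{i}$) cover the range $[\,6r_{1},6^{N}r_{1}\,]$ with bounded overlap, each scale $6^{j}r_{1}$ being within a factor $6$ of scales contributed by at most $O(1)$ consecutive indices $i$. The upper doubling inequality (1.2) --- which gives $\lambda(x_{0},6t)\le C_{\lambda}^{3}\lambda(x_{0},t)$ --- together with the monotonicity of $\lambda$ and the bound $\mu(6^{k}B_{i})\le\lambda(x_{0},6^{k}r_{i})$, lets one bound each summand of $S_{i}$ by a constant multiple (depending only on $\beta,n,C_{\lambda}$) of a summand of $S$ at a nearby scale; it is essential here that only the ratio $\mu(\cdot)/\lambda(\cdot)$, not $\mu$ alone, transfers cleanly between nearby scales. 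Summing over $(i,k)$ and using the bounded overlap then yields $\sum_{i=1}^{m-1}S_{i}\le C_{1}K^{(\beta)}_{B_{1},B_{m}}$ for some $C_{1}=C_{1}(\beta,n,C_{\lambda})$ (the finitely many scales just beyond $6^{N}r_{1}$ produce only an $O(1)$ term, absorbed since $K^{(\beta)}_{B_{1},B_{m}}\ge1$). I expect this bookkeeping --- forced by the radii $r_{i}$ not being exact integer powers of $6$ apart --- to be the main obstacle; the rest is routine.

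For (b), the hypothesis $K^{(\beta)}_{B_{i},B_{i+1}}>P_{\beta}$ gives $S_{i}>P_{\beta}-1$ for each $i$, so summing and using (a),
$$(m-1)(P_{\beta}-1)<\sum_{i=1}^{m-1}S_{i}\le C_{1}K^{(\beta)}_{B_{1},B_{m}}.$$
Taking $P_{\beta}$ large enough that $P_{\beta}-1\ge1$ (the only use of its largeness) gives $m-1\le C_{1}K^{(\beta)}_{B_{1},B_{m}}$, hence
$$\sum_{i=1}^{m-1}K^{(\beta)}_{B_{i},B_{i+1}}=(m-1)+\sum_{i=1}^{m-1}S_{i}\le C_{1}K^{(\beta)}_{B_{1},B_{m}}+C_{1}K^{(\beta)}_{B_{1},B_{m}}=2C_{1}K^{(\beta)}_{B_{1},B_{m}},$$
which is the claim with $C=2C_{1}$. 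One could instead obtain (a) by iterating the subadditivity in Lemma 2.3(1), but the term-by-term comparison keeps the constants transparent.
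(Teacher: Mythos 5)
Your overall architecture (compare each summand of $S_i$ to a summand of $K^{(\beta)}_{B_1,B_m}$ at a comparable scale, then absorb the $m-1$ constants) is the right one — the paper itself omits the proof and refers to [2,3,22], where essentially this scale-counting argument is carried out — but there is a genuine gap at the heart of step (a): the "bounded overlap" assertion is not proved, and it is false exactly in the situation you disclaim, namely when the largeness of $P_\beta$ is not used. Your parenthetical "the largeness of $P_{\beta}$ being needed only for (b)" is wrong. For a fixed $i$, the scales $6^kr_i$, $1\le k\le N_i$, lie in the interval $[6r_i,6r_{i+1})$, and these intervals are disjoint in $i$; but if $N_i=1$ for many consecutive $i$ (radii $r_i$ all within a factor $6$ of one another, e.g. $r_i=1+i/m$ in $\mathbb{R}^n$ with Lebesgue measure and $\lambda(x,r)=c_nr^n$), then arbitrarily many indices $i$ contribute a scale to the single window $[6r_1,6^2r_1)$. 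In that configuration $\sum_i S_i\sim m$ while $K^{(\beta)}_{B_1,B_m}\le 2$, so the inequality of step (a) simply fails; nothing about concentricity or the monotonicity/doubling of $\lambda$ can rescue it, because the failure is combinatorial (unbounded multiplicity of scales), not analytic.

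The repair is short, and it is precisely where $P_\beta$ must enter step (a): by (1.2) one has $\mu(6^kB)\le\lambda(x_B,6^kr_B)$, so every summand in (1.4) is at most $1$ and hence $K^{(\beta)}_{B_i,B_{i+1}}\le 1+N_i$. Thus the hypothesis $K^{(\beta)}_{B_i,B_{i+1}}>P_\beta$ with, say, $P_\beta\ge 3$ forces $N_i\ge 2$, whence $r_{i+1}>6^{N_i-1}r_i\ge 6r_i$; the disjoint scale ranges $[6r_i,6r_{i+1})$ then each span at least a factor $6$, so any factor-$6$ window $[6^jr_1,6^{j+1}r_1)$ meets at most two of them, and each meets it in at most one scale $6^kr_i$. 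With this multiplicity bound in hand, your term-by-term comparison (using $\mu(B(x_0,t))\le\mu(B(x_0,t'))$ and $\lambda(x_0,t')\le C_\lambda^3\lambda(x_0,t)$ for $t\le t'\le 6t$, raised to the power $1-\beta/n$) and your step (b) go through as written, and the boundary scales beyond $6^Nr_1$ are indeed an $O(1)$ contribution. So the proposal is salvageable, but as it stands the key counting step is asserted rather than proved, and the explicit claim about where $P_\beta$ is needed is incorrect.
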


\begin{lem}
For $0\leq \beta<n$, there exists a positive constant $P'_{\beta}$
(big enough), depending on $\beta$, $n$ and $C_{\lambda}$, such that
if $x\in X$ is some fixed point and $\{f_{B}\}_{B\ni x}$ is a set of
numbers such that $|f_{B}-f_{Q}|\leq C_{x}$ for all doubling balls
$B\subset Q$ with $x\in B$ such that $K^{(\beta)}_{B,Q}\leq
P'_{\beta}$, then there exists a positive constant $C$, depending on
$\beta$, $n$ and $C_{\lambda}$, such that for all foubling balls
$B\subset Q$ with $x\in B$, $|f_{B}-f_{Q}|\leq CK^{(\beta)}_{B,Q}
C_{x}$.
\end{lem}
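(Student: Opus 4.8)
The plan is to derive the estimate for an arbitrary pair of doubling balls $B\subset Q$ with $x\in B$ from the small-coefficient hypothesis by inserting a finite chain of doubling balls between $B$ and $Q$, applying the hypothesis across each link, and bounding the number of links by means of Lemma 2.4; every manipulation of the coefficients will be a direct application of Lemma 2.3. I fix $P'_\beta$ to be specified at the end, large enough --- depending only on $\beta$, $n$, $C_\lambda$, on the constant $P_\beta$ of Lemma 2.4 and on the absolute constants of Lemma 2.3 --- so that every inequality below holds; I may also assume $K^{(\beta)}_{B,Q}>P'_\beta$, since otherwise the conclusion is the hypothesis itself.

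The first step is a reduction to the case in which $Q$ is a concentric $6$-power dilation of $B$. Let $N$ be the least integer with $Q\subset 6^N B$; since $x\in B\subset Q$, a short triangle-inequality computation yields $6^{N-1}r_B<2r_Q$, hence $Q\subset 6^N B$ and $6^N B\subset CQ$, so these two balls have comparable sizes and Lemma 2.3(2) gives $K^{(\beta)}_{Q,6^N B}\leq C$. Let $\widehat Q$ be the smallest doubling ball of the form $6^j B$ that contains $6^N B$ (well defined by the standing conventions, cf. the ball $\widetilde B$ of Definition 1.5); the dilations strictly between $6^N B$ and $\widehat Q$ are non-doubling, so Lemma 2.3(3) gives $K^{(\beta)}_{6^N B,\widehat Q}\leq C$, and Lemma 2.3(1) then furnishes both $K^{(\beta)}_{Q,\widehat Q}\leq C$ and $K^{(\beta)}_{B,\widehat Q}\leq C\,K^{(\beta)}_{B,Q}$ (the last using $K^{(\beta)}_{B,Q}>P'_\beta\geq 1$ to absorb an additive constant). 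Since $Q$ and $\widehat Q$ are doubling, contain $x$, and $K^{(\beta)}_{Q,\widehat Q}\leq C\leq P'_\beta$, the hypothesis gives $|f_Q-f_{\widehat Q}|\leq C_x$; hence it suffices to bound $|f_B-f_{\widehat Q}|$, where now $\widehat Q=6^{j_0}B$ is a doubling dilation of $B$.

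The second step builds the chain. From $B=6^0 B,6^1 B,\dots,6^{j_0}B=\widehat Q$ retain the subsequence $B=6^{n_0}B\subset 6^{n_1}B\subset\cdots\subset 6^{n_L}B=\widehat Q$ of doubling balls; between consecutive terms the intermediate dilations are non-doubling, so $K^{(\beta)}_{6^{n_\ell}B,6^{n_{\ell+1}}B}\leq C$ by Lemma 2.3(3). From this subsequence select greedily a subchain $B=6^{m_0}B\subset 6^{m_1}B\subset\cdots\subset 6^{m_M}B=\widehat Q$: given $6^{m_i}B$, stop and put $6^{m_{i+1}}B=\widehat Q$ if $K^{(\beta)}_{6^{m_i}B,\widehat Q}\leq P'_\beta$; otherwise let $6^{m_{i+1}}B$ be the largest ball of the doubling subsequence with $K^{(\beta)}_{6^{m_i}B,6^{m_{i+1}}B}\leq P'_\beta$ (nonempty, since the next doubling ball after $6^{m_i}B$ already satisfies this bound once $P'_\beta\geq C$, by Lemma 2.3(3)). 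Every consecutive coefficient is $\leq P'_\beta$, so the hypothesis gives $|f_{6^{m_i}B}-f_{6^{m_{i+1}}B}|\leq C_x$ and therefore $|f_B-f_{\widehat Q}|\leq M\,C_x$. For each non-terminal step, maximality of $m_{i+1}$ forces the next doubling ball $6^{n'}B$ beyond $6^{m_{i+1}}B$ to have $K^{(\beta)}_{6^{m_i}B,6^{n'}B}>P'_\beta$, while $K^{(\beta)}_{6^{m_{i+1}}B,6^{n'}B}\leq C$ by Lemma 2.3(3); feeding both into the quasi-triangle inequality of Lemma 2.3(1) forces $K^{(\beta)}_{6^{m_i}B,6^{m_{i+1}}B}>P_\beta$, once $P'_\beta$ has been taken large enough relative to $P_\beta$ and the constant of Lemma 2.3(1). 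Lemma 2.4 applied to the concentric chain $6^{m_0}B\subset\cdots\subset 6^{m_{M-1}}B$ then gives $\sum_{i=0}^{M-2}K^{(\beta)}_{6^{m_i}B,6^{m_{i+1}}B}\leq C\,K^{(\beta)}_{B,6^{m_{M-1}}B}\leq C\,K^{(\beta)}_{B,\widehat Q}\leq C\,K^{(\beta)}_{B,Q}$, and since each summand exceeds $P_\beta\geq 1$ we get $M\leq C\,K^{(\beta)}_{B,Q}$. Combining with the reduction, $|f_B-f_Q|\leq|f_B-f_{\widehat Q}|+|f_{\widehat Q}-f_Q|\leq M\,C_x+C_x\leq C\,K^{(\beta)}_{B,Q}\,C_x$.

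I expect the main obstacle to be the greedy selection itself: one must arrange that each non-terminal link has coefficient simultaneously at most $P'_\beta$ (so the hypothesis applies on that link) and more than $P_\beta$ (so Lemma 2.4 controls the total number $M$ of links by $K^{(\beta)}_{B,Q}$), and reconciling these two demands is precisely what fixes the admissible size of $P'_\beta$. The remaining ingredients --- the inclusion and radius estimates in the reduction, and the repeated passage across runs of non-doubling dilations via parts (1) and (3) of Lemma 2.3 --- are routine.
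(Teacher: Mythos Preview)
Your proposal is correct and is precisely the standard chain argument that the paper has in mind: the paper omits the proof of Lemma~2.5 entirely and simply refers to Lemma~3.12 of [3] and Lemma~6 of [2] (which in turn go back to Tolsa~[22]), and your reduction to concentric $6$-power dilations followed by a greedy selection of a doubling subchain, with Lemma~2.3 controlling each link and Lemma~2.4 bounding the number of links, is exactly the proof found in those references.
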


\begin{proof} (of Theorem1.1) 
For all $1<p<n/\alpha$, we firstly establish the following sharp
maximal function estimate
\begin{equation}M^{\sharp,(\alpha)}(I_{\alpha}f)(x)\leq CM^{(\alpha)}_{r,(5)}f(x).
\end{equation}
Suppose (2.14) is valid for a moment. Choosing $r$ such that
$1<r<p<n/\alpha$ and $1/q=1/p-\alpha/n$. By Lemma 2.1 and Lemma 2.2,
we have

\begin{equation}
\begin{split}
&||I_{\alpha}f||_{L^{q}(\mu)}\leq
||N(I_{\alpha}f)||_{L^{q}(\mu)}\leq
C||M^{\sharp,(\alpha)}(I_{\alpha}f)||_{L^{q}(\mu)}\\
\leq& C||M^{(\alpha)}_{r,(5)}f||_{L^{q}(\mu)}\leq C
||f||_{L^{p}(\mu)}.
\end{split}
\end{equation}

As in the proof of Theorem 9.1 in [22], to obtain (2.14),  by Lemma
2.4 and Lemma 2.5, it suffices to show that
\begin{equation}\frac{1}{\mu(6B)}\int_{B}|I_{\alpha}f(y)-m_{B}(I_{\alpha}(f\chi_{X\backslash \frac{6}{5}B}))|d\mu(y)
\leq CM^{(\alpha)}_{r,(5)}f(x).
\end{equation}
 holds for
any $x$ and ball $B$ with $x\in B$, and
\begin{equation}
|m_{B}(I_{\alpha}(f\chi_{X\backslash
\frac{6}{5}B}))-m_{Q}(I_{\alpha}(f\chi_{X\backslash
\frac{6}{5}Q}))|\leq CK^{(\alpha)}_{B,Q}M^{(\alpha)}_{r,(5)}f(x)
\end{equation}
for all balls $B\subset Q$ with $x\in B$, where $B$ is an arbitrary
ball and $Q$ is a doubling ball.

For any ball $B$, it is easy to see that
\begin{equation}
\begin{split}
&\frac{1}{\mu(6B)}\int_{B}|I_{\alpha}f(y)-m_{B}(I_{\alpha}(f\chi_{X\backslash \frac{6}{5}B}))|d\mu(y)\\
\leq &
\frac{1}{\mu(6B)}\int_{B}|I_{\alpha}(f\chi_{\frac{6}{5}B})(y)|d\mu(y)\\
&\ \ \ +\frac{1}{\mu(6B)}\int_{B}|I_{\alpha}(f\chi_{X\backslash
\frac{6}{5}B})(y)-m_{B}(I_{\alpha}(f\chi_{X\backslash
\frac{6}{5}B}))|d\mu(y)\\
=:&I_{1}+I_{2}.
\end{split}
\end{equation}

To estimate $I_{1}$, by (i) in Definition 1.6, the properties of
function $\lambda$ and the H\"{o}lder's inequality, we get
\begin{equation}
\begin{split}
I_{1}&\leq \frac{C}{\mu(6B)}\int_{B}\int_{\frac{6}{5}B}\frac{|f(z)|}{\lambda(y,d(y,z))^{1-\alpha/n}}d\mu(z)d\mu(y)\\
&\leq \frac{C}{\mu(6B)}\int_{B}\int_{\frac{6}{5}B}\frac{|f(z)|}{\lambda(y,r_{B})^{1-\alpha/n}}d\mu(z)d\mu(y)\\
&\leq \frac{C}{\mu(6B)}\int_{B}\int_{\frac{6}{5}B}\frac{|f(z)|}{\lambda(y,r_{6B})^{1-\alpha/n}}d\mu(z)d\mu(y)\\
&\leq \frac{C}{\mu(6B)}\int_{B}\int_{\frac{6}{5}B}\frac{|f(z)|}{\mu(6B)^{1-\alpha/n}}d\mu(z)d\mu(y)\\
&\leq
\frac{C}{\mu(6B)}\int_{B}\biggl[\frac{1}{\mu(6B)^{1-\frac{\alpha
r}{n}}}\int_{\frac{6}{5}B}|f(z)|^{r}d\mu(z)\biggr]^{\frac{1}{r}}\biggl[\frac{\mu(\frac{6}{5}B)}{\mu(6B)}\biggr]^{1-\frac{1}{r}}d\mu(y)\\
&\leq CM^{(\alpha)}_{r,(5)}f(x).\\
\end{split}
\end{equation}

For $y,y_{0}\in B$, by (ii) in Definition 1.6 and the properties of
function $\lambda$ and the H\"{o}lder's inequality, we obtain
\begin{equation}
\begin{split}
&|I_{\alpha}(f\chi_{X\backslash
\frac{6}{5}B})(y)-I_{\alpha}(f\chi_{X\backslash
\frac{6}{5}B})(y_{0})|\\
\leq &C
\int_{X\backslash\frac{6}{5}B}\frac{d(y,y_{0})^{\epsilon}}{d(y,z)^{\epsilon}\lambda(y,d(y,z))^{1-\alpha/n}}|f(z)|d\mu(z)\\
\leq
&C\sum_{k=1}^{\infty}\int_{6^{k}\frac{6}{5}B\backslash6^{k-1}\frac{6}{5}B}6^{-k\epsilon}
\frac{|f(z)|d\mu(z)}{\lambda(y,r_{5\times6^{k}\frac{6}{5}B})^{1-\alpha/n}}\\
\leq &C\sum_{k=1}^{\infty}6^{-k\epsilon}\biggl[\frac{1}{\mu(5\times
6^{k}\frac{6}{5}B)^{1-\frac{\alpha
r}{n}}}\int_{6^{k}\frac{6}{5}B}|f(z)|^{r}d\mu(z)\biggr]^{\frac{1}{r}}
\biggl[\frac{\mu(6^{k}\frac{6}{5}B)}{\mu(5\times6^{k}\frac{6}{5}B)}\biggr]^{1-\frac{1}{r}}\\
\leq& CM^{(\alpha)}_{r,(5)}f(x).\\
\end{split}
\end{equation}

 Taking the mean over $y_{0}\in B$,
then we have
\begin{equation}
I_{2}\leq CM^{(\alpha)}_{r,(5)}f(x).
\end{equation}
So (2.16) holds from (2.18) to (2.21).

 Now we prove (2.17). Consider
two balls $B\subset Q$ with $x\in B$, where $B$ is an arbitrary ball
and $Q$ is a doubling ball. Let $N=N_{B,Q}+1$, then we have
\begin{equation}
\begin{split}
&\biggl|m_{B}\biggr[I_{\alpha}(f\chi_{X\backslash\frac{6}{5}B})\biggr]
-m_{Q}\biggl[I_{\alpha}(f\chi_{X\backslash\frac{6}{5}Q})\biggr]\biggr|\\
\leq &
\biggl|m_{B}\biggl[I_{\alpha}(f\chi_{X\backslash6^{N}B})\biggr]
-m_{Q}\biggl[I_{\alpha}(f\chi_{X\backslash6^{N}B})\biggr]\biggr|\\
&\ \ +\biggl|m_{B}\biggl[I_{\alpha}
(f\chi_{6^{N}B\backslash\frac{6}{5}B})\biggr]\biggr|+\biggl|m_{Q}\biggl[I_{\alpha}
(f\chi_{6^{N}B\backslash\frac{6}{5}Q})\biggr]\biggr|\\=&:J_{1}+J_{2}+J_{3}.
\end{split}
\end{equation}

With the same method to estimate $I_{2}$, we immediately get
\begin{equation}J_{1}\leq C
M^{(\alpha)}_{r,(5)}f(x). \end{equation}

To estimate $J_{2}$, for $z\in B$, it is easy to see that
\begin{equation}|I_{\alpha}(f\chi_{6^{N}B\backslash\frac{6}{5}B})(z)|\leq |I_{\alpha}(f\chi_{6^{N}B\backslash6B})(z)|
+|I_{\alpha}(f\chi_{6B\backslash\frac{6}{5}B})(z)|.\end{equation}

By (i) in Definition 1.6 and the H\"{o}lder's inequality, we deduce
\begin{equation}
\begin{split}
&|I_{\alpha}(f\chi_{6^{N}B\backslash6B})(z)|\\
\leq&
C\int_{6^{N}B\backslash6B}\frac{|f(y)|}{\lambda(z,d(z,y))^{1-\alpha/n}}d\mu(y)\\
\leq&C \sum_{k=1}^{N-1}\int_{6^{k+1}B\backslash6^{k}B}\frac{|f(y)|}{\lambda(z,r_{6^{k+1}B})^{1-\frac{\alpha}{n}}}d\mu(y)\\
\leq&C \sum_{k=1}^{N-1}[\frac{\mu(5\times
6^{k+1}B)}{\lambda(z,r_{5\times6^{k+1}B})}]^{1-\frac{\alpha}{n}}\frac{1}{\mu(5\times6^{k+1}B)^{1-\frac{\alpha}{n}}}\int_{6^{k+1}B}|f(y)|d\mu(y)\\
\leq&C
\sum_{k=1}^{N-1}[\frac{\mu(5\times6^{k+1}B)}{\lambda(z,r_{5\times6^{k+1}B})}]^{1-\frac{\alpha}{n}}\biggl[\frac{1}{\mu(5\times
6^{k+1}\frac{6}{5}B)^{1-\frac{\alpha
r}{n}}}\int_{6^{k+1}\frac{6}{5}B}|f(y)|^{r}d\mu(y)\biggr]^{\frac{1}{r}}\\
&\ \times\biggl[\frac{\mu(6^{k+1}\frac{6}{5}B)}{\mu(5\times6^{k+1}\frac{6}{5}B)}\biggr]^{1-\frac{1}{r}}\\
\leq& CK^{(\alpha)}_{B,Q}M^{(\alpha)}_{r,(5)}f(x).\\
\end{split}
\end{equation}
Also, we obtain that
\begin{equation}
\begin{split}
&|I_{\alpha}(f\chi_{6B\backslash\frac{6}{5}B})(z)|\\
\leq &C\int_{6B\backslash\frac{6}{5}B}\frac{|f(y)|}{\lambda(z,r_{B})^{1-\alpha/n}}d\mu(y)\\
\leq &\frac{C}{\mu(5\times 6B)^{1-\alpha/n}}\int_{6B}|f(y)|d\mu(y)\\
\leq &C\biggl[\frac{1}{\mu(5\times 6B)^{1-\frac{\alpha
r}{n}}}\int_{6B}|f(y)|^{r}d\mu(y)\biggr]^{\frac{1}{r}}\biggl[\frac{\mu(6B)}{\mu(5\times6B)}\biggr]^{1-\frac{1}{r}}\\
\leq& CM^{(\alpha)}_{r,(5)}f(x).\\
\end{split}
\end{equation}
Then
\begin{equation}|I_{\alpha}(f\chi_{6^{N}B\backslash\frac{6}{5}B})(z)|\leq CK^{(\alpha)}_{B,Q}M^{(\alpha)}_{r,(5)}f(x). \end{equation}
Taking mean for $z$ over $B$, we have
\begin{equation}J_{2}\leq CK^{(\alpha)}_{B,Q}M^{(\alpha)}_{r,(5)}f(x). \end{equation}

Similarly, we also obtain that
\begin{equation}J_{3}\leq CK^{(\alpha)}_{B,Q}M^{(\alpha)}_{r,(5)}f(x). \end{equation}
From (2.22) to (2.29), we yield (2.17) holds. Hence the proof of
Theorem 1.1 is completed.
\end{proof}

\section{Proof of Theorem 1.2}
Let us firstly give the equivalent definition of $RBMO(\mu)$, which
is useful in proving Theorem 1.2.

\noindent{\bf Definition 3.1.}$^{[8]}$\quad Let $\rho>1$ be some
fixed constant. A function $b\in L_{loc}^{1}(\mu)$ is said to belong
to $RBMO(\mu)$ if there exists a constant $C
>0$ such that for any ball $B$, a number $b_{B}$ such that
\begin{equation}
\frac{1}{\mu(\rho B)}\int_{B}|b(x)-b_{B}|d\mu(x)\leq C,
\end{equation}
and for any two balls $B\subset Q$,
 \begin{equation}
  |b_{B}-b_{Q}|\leq CK_{B,Q}.
\end{equation}
The minimal constant $C$ appearing in (3.1) and (3.2) is defined to
be the $RBMO(\mu)$ norm of $f$ and denoted by $||b||_{\ast}$. The
norm $||b||_{\ast}$ is independent of $\rho>1$.

\begin{lem}$^{[22]}$
For any ball $B$, we have
\begin{equation}
|b_{B}-b_{6^{k}\frac{6}{5}B}|\leq C k||b||_{\ast}.\end{equation}
\end{lem}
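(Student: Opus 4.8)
The plan is to bound the coefficient $K_{B,6^{k}\frac{6}{5}B}$ by a constant multiple of $k$ and then read off the estimate from the second defining inequality of $RBMO(\mu)$, namely (3.2). So first I would fix a ball $B=B(x_{B},r_{B})$ and an integer $k\ge 1$, and note that $Q:=6^{k}\frac{6}{5}B$ is concentric with $B$ and contains it. By Definition 1.4 with $\beta=0$,
\[
K_{B,Q}=1+\sum_{j=1}^{N_{B,Q}}\frac{\mu(6^{j}B)}{\lambda(x_{B},6^{j}r_{B})},
\]
where $N_{B,Q}$ is the least integer with $6^{N_{B,Q}}r_{B}\ge r_{Q}=6^{k}\frac{6}{5}r_{B}$; since $6^{k}<6^{k}\cdot\frac{6}{5}\le 6^{k+1}$, this forces $N_{B,Q}=k+1$. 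Moreover the upper doubling inequality (1.2) yields $\mu(6^{j}B)=\mu(B(x_{B},6^{j}r_{B}))\le\lambda(x_{B},6^{j}r_{B})$, so each summand is at most $1$ and hence $K_{B,6^{k}\frac{6}{5}B}\le 1+(k+1)=k+2$.

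With this in hand the conclusion is immediate: since $B\subset 6^{k}\frac{6}{5}B$, applying (3.2) to this pair of balls gives
\[
|b_{B}-b_{6^{k}\frac{6}{5}B}|\le C\,K_{B,6^{k}\frac{6}{5}B}\,||b||_{\ast}\le C(k+2)||b||_{\ast}\le Ck||b||_{\ast}
\]
for all $k\ge 1$, which is the claim. (For $k=0$ there is nothing to prove under the convention $k\in\mathbb{N}$.)

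An equivalent, telescoping, route avoids computing $N_{B,Q}$: interpose the chain of concentric balls $B\subset\frac{6}{5}B\subset 6\cdot\frac{6}{5}B\subset\cdots\subset 6^{k}\frac{6}{5}B$ (that is $k+1$ consecutive inclusions), apply the triangle inequality, and bound each of the $k+1$ differences $|b_{B'}-b_{B''}|$ by $C||b||_{\ast}$ using (3.2) together with Lemma 2.3(2), since in every link the two balls are concentric with radii differing by at most the fixed factor $6$, so their $K$-coefficient is bounded by an absolute constant. Either way the argument is routine and there is no genuine obstacle. The only points needing a moment's care are the harmless $\frac{6}{5}$ book-keeping (which is why the number of links, equivalently $N_{B,Q}$, comes out to be $k+1$ rather than $k$) and the degenerate case $k=0$; neither affects anything beyond the value of the implied constant.
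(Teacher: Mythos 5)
Your proposal is correct: the paper gives no proof of this lemma, merely citing [22], and both of your routes (bounding $K_{B,6^{k}\frac{6}{5}B}\le N_{B,Q}+1\le k+2$ via the upper doubling inequality $\mu(6^{j}B)\le\lambda(x_{B},6^{j}r_{B})$ and then invoking (3.2), or telescoping through the $k+1$ concentric inclusions with Lemma 2.3(2)) are exactly the standard argument being imported from Tolsa's setting. The only caveats you already handle: the lemma is used only for $k\ge 1$, and the constant absorbs the harmless $+2$.
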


\begin{proof} (of Theorem1.2) 
  For all $1<p<n/\alpha$, we firstly establish the following sharp
maximal function estimate
\begin{equation}M^{\sharp,(\alpha)}([b,I_{\alpha}]f)(x)\leq
C||b||_{\ast}[
M^{(\alpha)}_{r,(5)}f(x)+M_{r,(6)}(I_{\alpha}f)(x)+I_{\alpha}(|f|)(x)].
\end{equation}
Suppose (3.4) holds for a moment. By Lemma 3.3 in [22], we can
assume that $b\in L^{\infty}(\mu)$. Choosing $r$ such that
$1<r<p<n/\alpha$ and $1/q=1/p-\alpha/n$. By Lemma 2.1, Lemma 2.2 and
Theorem 1.1 in this paper, we obtain

\begin{equation}
\begin{split} &||[b,I_{\alpha}]f||_{L^{q}(\mu)}\leq
||N([b,I_{\alpha}]f)||_{L^{q}(\mu)}\leq
C||M^{\sharp,(\alpha)}([b,I_{\alpha}]f)||_{L^{q}(\mu)}\\
\leq
C&\{||M^{(\alpha)}_{r,(5)}f||_{L^{q}(\mu)}+||M_{r,(6)}(I_{\alpha}f)||_{L^{q}(\mu)}+||I_{\alpha}(|f|)||_{L^{q}(\mu)}\}\\
\leq C& \{||f||_{L^{p}(\mu)}+||I_{\alpha}f||_{L^{q}(\mu)}\}\leq C
||f||_{L^{p}(\mu)}.
\end{split}
\end{equation}

Let $\{b_{B}\}$ be a collection of numbers satisfying for ball $B$,
\begin{equation}
\int_{B}|b-b_{B}|d\mu\leq 2\mu(6B)||b||_{\ast},
\end{equation}
and for two balls $B\subset Q$,
\begin{equation}
|b_{B}-b_{Q}|\leq 2K_{B,Q}||b||_{\ast}.
\end{equation}

As in the proof of Theorem 1 in [2], to obtain (3.4), by Lemma 2.4
and Lemma 2.5 in Section 2 of this paper,  it suffices to deduce
that
\begin{equation}\frac{1}{\mu(6B)}\int_{B}|[b,I_{\alpha}](f)(y)-h_{B}|d\mu(y)
\leq C||b||_{\ast}(
M^{(\alpha)}_{r,(5)}f(x)+M_{r,(6)}(I_{\alpha}f)(x)).
\end{equation}
 holds for
any $x$ and ball $B$ with $x\in B$, and
\begin{equation}
|h_{B}-h_{Q}|\leq
C||b||_{\ast}K_{B,Q}K^{(\alpha)}_{B,Q}(M^{(\alpha)}_{r,(5)}f(x)+I_{\alpha}(|f|)(x))
\end{equation}
for all balls $B\subset Q$ with $x\in B$, where $B$ is an arbitrary
ball, $Q$ is a doubling ball and for any ball $B$, we denote
$$h_{B}:=
m_{B}(I_{\alpha}[(b-b_{B})f\chi_{X\backslash\frac{6}{5}B}]).$$

To obtain (3.8), we write $[b,I_{\alpha}]f$ as follows.
\begin{equation}
\begin{split}
 &[b,I_{\alpha}]f(y)=(b(y)-b_{B})I_{\alpha}f(y)-I_{\alpha}((b-b_{B})f)(y)\\
=&(b(y)-b_{B})I_{\alpha}f(y)-I_{\alpha}((b-b_{B})f_{1})(y)-I_{\alpha}((b-b_{B})f_{2})(y),
\end{split}
\end{equation}
where $f_{1}=f\chi_{\frac{6}{5}B}$ and $f_{2}=f-f_{1}$. Now, by the
H\"{o}lder's inequality, we have
\begin{equation}
\begin{split}&\frac{1}{\mu(6B)}\int_{B}|(b(y)-b_{B})I_{\alpha}f(y)|d\mu(y)\\
\leq &(\frac{1}{\mu(6B)}\int_{B}|b(y)-b_{B}|^{r'}d\mu(y))^{1/r'}
(\frac{1}{\mu(6B)}\int_{B}|I_{\alpha}f(y)|^{r}d\mu(y))^{1/r}\\
\leq &C||b||_{\ast}M_{r,(6)}(I_{\alpha}f)(x).
\end{split}
\end{equation}

We take $s=\sqrt{r}$ and let $1/t=1/s-\alpha/n$. Using the
H\"{o}lder's inequality, the result of Theorem 1.1 and Definition
3.1, we obtain

\begin{equation}
\begin{split}
&\frac{1}{\mu(6B)}\int_{B}|I_{\alpha}((b-b_{B})f_{1})(y)|d\mu(y)\\
\leq&
\frac{\mu(B)^{1-1/t}}{\mu(6B)}||I_{\alpha}((b-b_{B})f_{1})||_{L^{t}(\mu)}\\
\leq&
\frac{\mu(B)^{1-1/t}}{\mu(6B)}||((b-b_{B})f_{1})||_{L^{s}(\mu)}\\
\leq&
\biggl(\frac{1}{\mu(6B)}\int_{\frac{6}{5}B}|b-b_{B}|^{ss'}d\mu(y)\biggr)^{\frac{1}{ss'}}\\
&\ \times\biggl(\frac{1}{\mu(6B)^{1-\alpha
r/n}}\int_{\frac{6}{5}B}|f(y)|^{r}d\mu(y)\biggr)^{\frac{1}{r}}\\
\leq& C||b||_{\ast}M^{(\alpha)}_{r,(5)}f(x).
\end{split}
\end{equation}

Next, to prove (3.8), we only need to compute
$|I_{\alpha}((b-b_{B})f_{2})(y)-h_{B}|$. For $y,\ y_{0}\in B$, by
Lemma 3.1, we have
\begin{equation}
\begin{split}
&|I_{\alpha}((b-b_{B})f_{2})(y)-I_{\alpha}((b-b_{B})f_{2})(y_{0})|\\
\leq &C\int_{X\backslash\frac{6}{5}B}
\frac{d(y,y_{0})^{\epsilon}}{d(y,z)^{\epsilon}\lambda(y,d(y,z))^{1-\frac{\alpha}{n}}}|b(z)-b_{B}||f(z)|d\mu(z)\\
\leq
&C\sum_{k=1}^{\infty}\int_{6^{k}\frac{6}{5}B\backslash6^{k-1}\frac{6}{5}B}
\frac{6^{-k\epsilon}}{\lambda(y,r_{6^{k}\frac{6}{5}B})^{1-\frac{\alpha}{n}}}\\
&\ \ \times(|b(z)-b_{6^{k}\frac{6}{5}B}|+|b_{B}-b_{6^{k}\frac{6}{5}B}|)|f(z)|d\mu(z)\\
\leq&
C\sum_{k=1}^{\infty}6^{-k\epsilon}\frac{1}{\mu(5\times6^{k}\frac{6}{5}B
)^{1-\frac{\alpha}{n}}}\int_{6^{k}\frac{6}{5}B}|b(z)-b_{6^{k}\frac{6}{5}B}|
|f(z)|d\mu(z)\\
&+C\sum_{k=1}^{\infty}k6^{-k\epsilon}||b||_{\ast}\frac{1}{\mu(5\times6^{k}\frac{6}{5}B
)^{1-\frac{\alpha}{n}}}\int_{6^{k}\frac{6}{5}B}|f(z)|d\mu(z)\\
\leq
&C\sum_{k=1}^{\infty}6^{-k\epsilon}\biggl[\frac{1}{\mu(5\times6^{k}\frac{6}{5}B
)^{1-\frac{\alpha r}{n}}}\int_{6^{k}\frac{6}{5}B}|f(z)|^{r}d\mu(z)\biggr]^{1/r}\\
&\times\biggl[\frac{1}{\mu(5\times6^{k}\frac{6}{5}B
)}\int_{6^{k}\frac{6}{5}B}|b(z)-b_{6^{k}\frac{6}{5}B}|^{r'}d\mu(z)\biggr]^{1/r'}\\
&+C\sum_{k=1}^{\infty}k6^{-k\epsilon}||b||_{\ast}\biggl[\frac{1}{\mu(5\times6^{k}\frac{6}{5}B
)^{1-\frac{\alpha
r}{n}}}\int_{6^{k}\frac{6}{5}B}|f(z)|^{r}d\mu(z)\biggr]^{\frac{1}{r}}\\
&\ \times\biggl[\frac{\mu(6^{k}\frac{6}{5}B)}{\mu(5\times6^{k}\frac{6}{5}B)}\biggr]^{\frac{1}{r'}}\\
\leq &C||b||_{\ast}M^{(\alpha)}_{r,(5)}f(x).
\end{split}
\end{equation}
Taking the mean over $y_{0}\in B$, we obtain
\begin{equation}
|I_{\alpha}((b-b_{B})f_{2})(y)-h_{B}|\leq
C||b||_{\ast}M^{(\alpha)}_{r,(5)}f(x).
\end{equation}
So (3.8) is proved.

To prove (3.9), we consider two balls $B\subset Q$ with $x\in B$,
where $B$ is an arbitrary ball, $Q$ is a doubling ball. Denote
$N=N_{B,Q}+1$, we write
\begin{equation}
\begin{split}
&|m_{B}(I_{\alpha}((b-b_{B})f\chi_{X\backslash\frac{6}{5}B}))-m_{Q}(I_{\alpha}((b-b_{Q})f\chi_{X\backslash\frac{6}{5}Q}))|\\
\leq & |m_{B}(I_{\alpha}((b-b_{B})f\chi_{6B\backslash\frac{6}{5}B}))|\\
&\ +|m_{B}(I_{\alpha}((b_{B}-b_{Q})f\chi_{X\backslash6B}))|\\
&\ +|m_{B}(I_{\alpha}((b-b_{Q})f\chi_{6^{N}B\backslash 6B}))|\\
&\
+|m_{B}(I_{\alpha}((b-b_{Q})f\chi_{X\backslash6^{N}B}))-m_{Q}(I_{\alpha}((b-b_{Q})f\chi_{X\backslash6^{N}B}))|\\
&\ +|m_{Q}(I_{\alpha}((b-b_{Q})f\chi_{6^{N}B\backslash\frac{6}{5}Q}))|\\
=:&L_{1}+L_{2}+L_{3}+L_{4}+L_{5}.
\end{split}
\end{equation}
For $y\in B$, by the H\"{o}lder's inequality, we get
\begin{equation}
\begin{split} &|I_{\alpha}((b-b_{B})f\chi_{6B\backslash\frac{6}{5}B})(y)|\\
\leq&
C\int_{6B}\frac{|b(z)-b_{B}||f(z)|}{\lambda(y,d(y,z))^{1-\alpha/n}}d\mu(z)\\
\leq&C
 \biggl[\frac{1}{\mu(5\times6B
)^{1-\frac{\alpha
r}{n}}}\int_{6B}|f(z)|^{r}d\mu(z)\biggr]^{\frac{1}{r}}\\
&\ \times\biggl[\frac{1}{\mu(5\times6B
)}\int_{6B}|b(z)-b_{B}|^{r'}d\mu(z)\biggr]^{\frac{1}{r'}}\\
\leq & C||b||_{\ast}M^{(\alpha)}_{r,(5)}f(x).
\end{split}
\end{equation}
Then we get $L_{1} \leq C||b||_{\ast}M^{(\alpha)}_{r,(5)}f(x).$

For $x,\ y\in B$, it is easy to see that
\begin{equation}
|I_{\alpha}(f\chi_{X\backslash6B})(y)|\leq
I_{\alpha}(|f|)(x)+CM^{(\alpha)}_{r,(5)}f(x).
\end{equation}
Then, by Definition 3.1, we get
\begin{equation}
L_{2} \leq CK_{B,Q}||b||_{\ast}(I_{\alpha}(|f|)(x)+
M^{(\alpha)}_{r,(5)}f(x)).
\end{equation}

Let us estimate $L_{3}$. For $y\in B$, we have
\begin{equation}
\begin{split}&|I_{\alpha}((b-b_{Q})f\chi_{6^{N}B\backslash6B})(y)| \\
\leq &C\sum_{k=1}^{N-1}\int_{6^{k+1}B\backslash
6^{k}B}\frac{|b(z)-b_{Q}||f(z)|}{\lambda(y,d(y,z))^{1-\alpha/n}}d\mu(z)\\
\leq & C\sum_{k=1}^{N-1}[\frac{\mu(5\times 6^{k+1}B)}{\lambda
(y,r_{5\times6^{k+1}B})}]^{1-\alpha/n}\\
&\times\frac{1}{\mu(5\times
6^{k+1}B)^{1-\alpha/n}}\int_{6^{k+1}B}|b(z)-b_{Q}||f(z)|d\mu(z)\\
\leq & CK^{(\alpha)}_{B,Q}\biggl[\frac{1}{\mu(5\times6^{k+1}B
)^{1-\frac{\alpha
r}{n}}}\int_{6^{k+1}B}|f(z)|^{r}d\mu(z)\biggr]^{\frac{1}{r}}\\
&\times\biggl[\frac{1}{\mu(5\times6^{k+1}B
)}\int_{6^{k+1}B}|b(z)-b_{Q}|^{r'}d\mu(z)\biggr]^{\frac{1}{r'}}\\
\leq &
CK^{(\alpha)}_{B,Q}M^{\alpha}_{r,(5)}f(x)\biggl[\frac{1}{\mu(5\times6^{k+1}B
)}\int_{6^{k+1}B}|b(z)-b_{6^{k+1}B}\\
&+b_{6^{k+1}B}-b_{Q}|^{r'}d\mu(z)\biggr]^{\frac{1}{r'}}\\
\leq &
CK_{B,Q}K^{(\alpha)}_{B,Q}||b||_{\ast}M^{(\alpha)}_{r,(5)}f(x).
\end{split}
\end{equation}
Here we have used the fact that $|b_{6^{k+1}B}-b_{Q}|\leq C
K_{B,Q}||b||_{\ast}$.

Taking the mean over $B$, we have
$$L_{3} \leq CK_{B,Q}K^{(\alpha)}_{B,Q}||b||_{\ast}M^{(\alpha)}_{r,(5)}f(x).$$

For $L_{4}$. Operating as in (3.13), for any $y\in B$ and $z\in Q$,
we obtain
$$|T((b-b_{Q})f\chi_{X\backslash 6^{N}B})(y)-T((b-b_{Q})f\chi_{X\backslash 6^{N}B})(z)|
\leq C||b||_{\ast}M^{(\alpha)}_{r,(5)}f(x).$$ Taking the mean over
$B$ for $y$ and over $Q$ for $z$, we have
$$L_{4}\leq C||b||_{\ast}M^{(\alpha)}_{r,(5)}f(x).$$

For $L_{5}$, similar to $L_{1}$, we can deduce $L_{5}\leq
C||b||_{\ast}M^{(\alpha)}_{r,(5)}f(x).$ Thus (3.9) is valid and the
proof of Theorem 1.2 is finished .
\end{proof}

\section{Proof of Theorem 1.3}

To prove Theorem 1.3, we need the following some lemmas.
\begin{lem}$^{[8,22]}$
Let $1\leq p<\infty$ and $1<\rho <\infty$. If $b\in RBMO(\mu)$, then
for any ball $B\in X$,
\begin{equation}
\biggl\{\frac{1}{\mu(\rho
B)}\int_{B}|b(x)-m_{\widetilde{B}}(b)|^{p}d\mu(x)\biggr\}^{1/p}\leq
C||b||_{\ast}.\end{equation}
\end{lem}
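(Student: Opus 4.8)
The plan is to deduce Lemma~4.1 from the $RBMO(\mu)$ estimate already available for $p=1$ together with the equivalence of the two descriptions of $RBMO(\mu)$ (Definition~1.5 and Definition~3.1) and the John--Nirenberg type self-improvement that is standard in this setting. First I would recall that by Definition~1.5 the quantity $\frac{1}{\mu(6B)}\int_B|b-m_{\widetilde B}(b)|\,d\mu$ is controlled by $\|b\|_\ast$ for every ball $B$; the content of the lemma is to upgrade the $L^1$ average on the left to an $L^p$ average, at the mild cost of enlarging the ball in the denominator from $6B$ (or $\rho B$) to an arbitrary fixed dilate $\rho B$. Since the norm $\|b\|_\ast$ is independent of the parameter $\rho>1$ (as stated after Definition~1.5), it suffices to prove the inequality for one convenient value of $\rho$, say $\rho$ slightly larger than $6$, and then reabsorb.

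The key step is a John--Nirenberg inequality for $RBMO(\mu)$ on non-homogeneous metric measure spaces: there exist constants $c_1,c_2>0$ such that for every ball $B$ and every $\lambda>0$,
\begin{equation}
\mu\bigl(\{x\in B:\ |b(x)-m_{\widetilde B}(b)|>\lambda\}\bigr)\le c_1\,\mu(\rho B)\,\exp\!\bigl(-c_2\lambda/\|b\|_\ast\bigr).
\end{equation}
This is proved exactly as in the non-doubling setting (Tolsa) and in the upper-doubling setting (Bui--Duong, Hyt\"onen et al.): one iterates a Calder\'on--Zygmund decomposition adapted to $\mu$, using property~(1) of Lemma~2.3 (the near-monotonicity and quasi-triangle inequalities for the coefficients $K_{B,Q}$) to control the oscillation of $b$ across the generations of stopping balls, together with the doubling properties of $\widetilde B$. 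Granting the distributional estimate, the $L^p$ bound follows by the layer-cake formula:
\begin{equation}
\frac{1}{\mu(\rho B)}\int_B|b(x)-m_{\widetilde B}(b)|^p\,d\mu(x)=\frac{p}{\mu(\rho B)}\int_0^\infty\lambda^{p-1}\mu\bigl(\{x\in B:|b-m_{\widetilde B}(b)|>\lambda\}\bigr)\,d\lambda\le c_1\,p\int_0^\infty\lambda^{p-1}e^{-c_2\lambda/\|b\|_\ast}\,d\lambda,
\end{equation}
and the last integral equals $c_1\,p\,\Gamma(p)\,(\|b\|_\ast/c_2)^p=C_{p}\|b\|_\ast^p$, which gives the claim after taking $p$-th roots. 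Finally one passes from the fixed $\rho$ used in the John--Nirenberg argument to an arbitrary $\rho\in(1,\infty)$ by splitting $B$ into a controlled number of subballs (using geometric doubling, Definition~1.1) when $\rho$ is small, or trivially by monotonicity when $\rho$ is large, invoking once more the $\rho$-independence of $\|b\|_\ast$.

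The main obstacle is the John--Nirenberg inequality itself: unlike in $\mathbb{R}^n$, there is no Besicovitch covering and the measure is only upper doubling, so the Calder\'on--Zygmund stopping-time construction must be carried out with the covering lemma of Heinonen (the one from [7] already used for Lemma~2.2) and the oscillation estimates must be tracked through the $K_{B,Q}^{(0)}=K_{B,Q}$ coefficients rather than through plain dilation counts. However, this result is precisely what is recorded in [8] and [22] (the references attached to the lemma), so in the write-up I would simply cite it and then present the short layer-cake computation above; the honest work is bookkeeping with Lemma~2.3, not a new idea.
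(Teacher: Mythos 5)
Your proposal is correct and matches what the paper does: the paper gives no argument at all for Lemma 4.1, simply citing [8,22], and the proof in those references (and in Hyt\"onen's adaptation to upper doubling spaces) is exactly the John--Nirenberg inequality for $RBMO(\mu)$ followed by the layer-cake computation you write out, with the $\rho$-independence handled by a covering/comparable-ball argument as you indicate. The only caveat is that [8,22] are stated for non-doubling measures on $\mathbb{R}^n$, so in the metric measure setting the John--Nirenberg step should be credited to (or re-proved as in) Hyt\"onen's framework paper [12], which you effectively acknowledge by noting the need for the covering lemma of [7] and the $K_{B,Q}$ bookkeeping.
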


\begin{lem}$^{[9]}$
For any ball $B$, we have
\begin{equation}
|m_{\widetilde{6^{j}\frac{6}{5}B}}(b)-m_{\widetilde{B}}(b)|\leq
Cj||b||_{\ast}.
\end{equation}
\end{lem}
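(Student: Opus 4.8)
The plan is to route the comparison through the family of numbers $\{b_B\}$ associated with $b$ in the equivalent Definition 3.1 of $RBMO(\mu)$. I fix once and for all a choice of $\{b_B\}$ satisfying (3.6) and (3.7), that is, $\frac{1}{\mu(6B)}\int_B|b-b_B|\,d\mu\le 2||b||_{\ast}$ for every ball $B$ and $|b_B-b_Q|\le 2K_{B,Q}||b||_{\ast}$ for all balls $B\subset Q$; since the $RBMO(\mu)$ norm does not depend on the parameter $\rho>1$, I may also read (3.1) with $\rho=6$.

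The first step will be the auxiliary estimate
\[
|m_{\widetilde B}(b)-b_B|\le C||b||_{\ast}\qquad\text{for every ball }B .
\]
To prove it, recall that $\widetilde B$ is a $(6,\beta_0)$-doubling ball of the form $6^kB$, so $\mu(6\widetilde B)\le\beta_0\mu(\widetilde B)$, whence (3.6) applied to $\widetilde B$ gives
\[
|m_{\widetilde B}(b)-b_{\widetilde B}|\le\frac{1}{\mu(\widetilde B)}\int_{\widetilde B}|b-b_{\widetilde B}|\,d\mu\le\frac{\beta_0}{\mu(6\widetilde B)}\int_{\widetilde B}|b-b_{\widetilde B}|\,d\mu\le 2\beta_0||b||_{\ast} .
\]
On the other hand, by the minimality of $\widetilde B$ in Definition 1.5 the dilates $6B,6^2B,\dots,6^{k-1}B$ are non-doubling balls, so Lemma 2.3(3) yields $K_{B,\widetilde B}\le C$ (trivially so when $k=0$, i.e. when $\widetilde B=B$); combining this with (3.7) gives $|b_B-b_{\widetilde B}|\le 2K_{B,\widetilde B}||b||_{\ast}\le C||b||_{\ast}$. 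The auxiliary estimate then follows by the triangle inequality.

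Next I will apply this auxiliary estimate twice, once to the ball $B$ and once to the ball $6^{j}\frac{6}{5}B$ --- whose smallest $(6,\beta_0)$-doubling dilate of the form $6^k(6^{j}\frac{6}{5}B)$ is exactly $\widetilde{6^{j}\frac{6}{5}B}$ --- obtaining $|m_{\widetilde B}(b)-b_B|\le C||b||_{\ast}$ and $|m_{\widetilde{6^{j}\frac{6}{5}B}}(b)-b_{6^{j}\frac{6}{5}B}|\le C||b||_{\ast}$. Since Lemma 3.1 (applied with $k=j$) gives $|b_B-b_{6^{j}\frac{6}{5}B}|\le Cj||b||_{\ast}$, the triangle inequality yields
\[
|m_{\widetilde{6^{j}\frac{6}{5}B}}(b)-m_{\widetilde B}(b)|\le|m_{\widetilde{6^{j}\frac{6}{5}B}}(b)-b_{6^{j}\frac{6}{5}B}|+|b_{6^{j}\frac{6}{5}B}-b_B|+|b_B-m_{\widetilde B}(b)|\le Cj||b||_{\ast},
\]
which is the claimed inequality.

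I do not expect a serious obstacle: the one point that needs care is that the uniform bound $K_{B,\widetilde B}\le C$ --- which is what allows the mean $m_{\widetilde B}(b)$ to be replaced by the reference number $b_B$ --- comes precisely from the minimality built into the definition of $\widetilde B$ together with Lemma 2.3(3), and that a single fixed choice of the family $\{b_B\}$ must be used in both applications of the auxiliary estimate and in Lemma 3.1 so that all the constants match. One could instead argue directly from the doubling-ball oscillation condition (1.6), but that forces one to control a coefficient of the form $K_{\widetilde B,\widetilde{6^{j}\frac{6}{5}B}}$ by $Cj$, which is less transparent than the detour through $\{b_B\}$ adopted here.
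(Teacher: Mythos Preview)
Your argument is correct. Note, however, that the paper itself does not prove Lemma~4.2: it is simply quoted from reference~[9] and stated without proof, so there is no ``paper's own proof'' against which to compare. The route you take --- passing through the auxiliary family $\{b_B\}$ of Definition~3.1, bounding $|m_{\widetilde B}(b)-b_{\widetilde B}|$ via the doubling property of $\widetilde B$, controlling $|b_B-b_{\widetilde B}|$ by means of Lemma~2.3(3) and the minimality built into the definition of $\widetilde B$, and then invoking Lemma~3.1 for the middle term --- is precisely the standard way to derive this estimate from the ingredients already assembled in Sections~2 and~3, and every step is justified. The only cosmetic point is that your final bound is really $C(j+1)\|b\|_\ast$, which is $\le C'j\|b\|_\ast$ for $j\ge 1$; since the lemma is applied in the paper only within sums over $j\ge 1$, this is harmless.
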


\begin{proof} (of Theorem 1.3) 
 We prove the theorem by induction on $k$.
If $k=1$, the result of Theorem 1.2 asserts that $[b,I_{\alpha}]$ is
bounded from $L^{p}(\mu)$ to $L^{q}(\mu)$ for any $1<p<n/\alpha$,
$0<\alpha <n$ and $1/q=1/p-\alpha/n$. Now we assume that $k\geq 2$
is an integer and that for any $1\leq i\leq k-1$ and any subset
$\sigma=\{\sigma(1),\cdot\cdot\cdot,\sigma(i)\}$ of
$\{1,\cdot\cdot\cdot,k\}$, $I_{\alpha,\vec{b}_{\sigma}}$ is bounded
from $L^{p}(\mu)$ to $L^{q}(\mu)$ for any for any $1<p<n/\alpha$,
$0<\alpha <n$ and $1/q=1/p-\alpha/n$. We next claim that for any
$1<r<\infty$, $I_{\alpha,\vec{b}}$ satisfies the following sharp
maximal function estimate
\begin{equation}
\begin{split}M^{\sharp,(\alpha)}(I_{\alpha,\vec{b}}f)(x) &\leq
C||\vec{b}||_{\ast}\{M_{r,(6)}(I_{\alpha}f)(x)+M^{(\alpha)}_{r,(5)}f(x)\}\\
&\ +C\sum_{i=1}^{k-1}\sum_{\sigma\in
C_{i}^{k}}||\vec{b}_{\sigma}||_{\ast}M_{r,(6)}(I_{\alpha,\vec{b}_{\sigma'}}f)(x).
\end{split}
\end{equation}

Suppose (4.3) holds for a moment.  Now we prove $T_{\vec{b}}$
satisfies (1.14). By Lemma 3.3 in [22], we can assume that $b_{i}\in
L^{\infty}(\mu)$ for $1\leq i\leq k$. Choosing $r$ such that
$1<r<p<n/\alpha$ and $1/q=1/p-\alpha/n$. By Lemma 2.1, Lemma2.2,
Theorem 1.1 and  Theorem 1.2 in this paper, we deduce
\begin{equation}
\begin{split} &||I_{\alpha,\vec{b}}f||_{L^{q}(\mu)}\leq
C||N(I_{\alpha,\vec{b}}f)||_{L^{q}(\mu)}
\leq C||M^{\sharp,(\alpha)}(I_{\alpha,\vec{b}}f)||_{L^{q}(\mu)}\\
\leq& C||\vec{b}||_{\ast}\{||M_{r,(6)}(I_{\alpha}f)||_{L^{q}(\mu)}+||M^{(\alpha)}_{r,(5)}f||_{L^{q}(\mu)}\}\\
&\ +C\sum_{i=1}^{k-1}\sum_{\sigma\in
C_{i}^{k}}||\vec{b}_{\sigma}||_{\ast}
||M_{r,(6)}(I_{\alpha,\vec{b}_{\sigma'}}f)||_{L^{q}(\mu)}\\
\leq&
C||\vec{b}||_{\ast}(||I_{\alpha}f||_{L^{q}(\mu)}+||f||_{L^{p}(\mu)})+C\sum_{i=1}^{k-1}\sum_{\sigma\in
C_{i}^{k}}||\vec{b}_{\sigma}||_{\ast}||I_{\alpha,\vec{b}_{\sigma'}}f||_{L^{q}(\mu)}\\
\leq& C||\vec{b}||_{\ast}||f||_{L^{p}(\mu)}
\end{split}
\end{equation}

As in the proof of Theorem 2 in [9], to obtain (4.3), by Lemma 2.4
and Lemma 2.5 in this paper,  it only need to show that
\begin{equation}
\begin{split}
&\frac{1}{\mu(6B)}\int_{B}|I_{\alpha,\vec{b}}f(y)-h_{B}|d\mu(y)\\
\leq&
C||\vec{b}||_{\ast}\{M_{r,(6)}(I_{\alpha}f)(x)+M^{(\alpha)}_{r,(5)}f(x)\}
 +C\sum_{i=1}^{k-1}\sum_{\sigma\in
C_{i}^{k}}||\vec{b}_{\sigma}||_{\ast}M_{r,(6)}(I_{\alpha,\vec{b}_{\sigma'}}f)(x)
\end{split}
\end{equation}
hold for all $x$ and $B$ with $x\in B$, and
\begin{equation}
\begin{split} |h_{B}-h_{Q}|&\leq
C(K_{B,Q})^{k}K^{(\alpha)}_{B,Q}||\vec{b}||_{\ast}\{M_{r,(6)}(I_{\alpha}f)(x)+M^{(\alpha)}_{r,(5)}f(x)\}\\
&\ +C(K_{B,Q})^{k}\sum_{i=1}^{k-1}\sum_{\sigma\in
C_{i}^{k}}||\vec{b}_{\sigma}||_{\ast}M_{r,(6)}(I_{\alpha,\vec{b}_{\sigma'}}f)(x)
\end{split}
\end{equation}
holds for any cube $B\subset Q$ with $x\in B$, where $B$ is an
arbitrary cube and $Q$ is a doubling cube. We denote
$$h_{B}=m_{B}\left(I_{\alpha}\left[\left(b_{1}-m_{\widetilde{B}}(b_{1})\right)\cdot\cdot\cdot \left(b_{k}-m_{\widetilde{B}}(b_{k})\right)
f\chi_{X\backslash\frac{6}{5}B}\right]\right),$$ and

$$h_{R}=m_{R}\left(I_{\alpha}\left[\left(b_{1}-m_{R}(b_{1})\right)\cdot\cdot\cdot \left(b_{k}-m_{R}(b_{k})\right)
f\chi_{X\backslash\frac{6}{5}R}\right]\right).$$

Let us firstly estimate (4.5). It is easy to see that
\begin{equation}\prod_{i=1}^{k}\left[b_{i}(z)-m_{\widetilde{B}}(b_{i})\right]=\sum_{i=0}^{k}\sum_{\sigma\in
C_{i}^{k}}[b(z)-b(y)]_{\sigma'}[b(y)-m_{\widetilde{B}}(b)]_{\sigma}
\end{equation} for $y,z\in X$, where if $i=0$, then
$\sigma'=\{1,2\cdot\cdot\cdot k\}$, $\sigma=\emptyset$ and
$[b(y)-m_{\widetilde{B}}(b)]_{\emptyset}=1$. Hence
$$I_{\alpha,\vec{b}}f(y)=I_{\alpha}\left(\prod_{i=1}^{k}\left[b_{i}-m_{\widetilde{B}}(b_{i})\right]f\right)(y)-\sum_{i=1}^{k}\sum_{\sigma\in
C_{i}^{k}}\left[b(y)-m_{\widetilde{B}}(b)\right]_{\sigma}I_{\alpha,\vec{b}_{\sigma'}}f(y),$$
and if $i=k$, we denote $I_{\alpha}f(y)$ by
$I_{\alpha,\vec{b}_{\sigma'}}f(y)$. Thus,
\begin{equation}
\begin{split}
&\frac{1}{\mu(6B)}\int_{B}|I_{\alpha,\vec{b}}f(y)-h_{B}|d\mu(y)\\
\leq
&\frac{1}{\mu(6B)}\int_{B}\left|I_{\alpha}\left(\prod_{i=1}^{k}\left[b_{i}-m_{\widetilde{B}}(b_{i})\right]
f\chi_{\frac{6}{5}B}\right)(y)\right|d\mu(y)\\
\ &+\sum_{i=1}^{k}\sum_{\sigma\in
C_{i}^{k}}\frac{1}{\mu(6B)}\int_{B}\left|\left[b(y)-m_{\widetilde{B}}(b)\right]_{\sigma}\right
|\left|I_{\alpha,\vec{b}_{\sigma'}}f(y)\right|d\mu(y)\\
\ &+\frac{1}{\mu(6B)}\int_{B}\left|I_{\alpha}
\left(\prod_{i=1}^{k}\left[b_{i}-m_{\widetilde{B}}(b_{i})\right]f\chi_{X\backslash\frac{6}{5}B}\right)(y)-h_{B}\right|d\mu(y)\\
=:&II_{1}+II_{2}+II_{3}.
\end{split}
\end{equation}

Write
$$b_{i}(y)-m_{\widetilde{Q}}(b_{i})=b_{i}(y)-m_{\widetilde{\frac{4}{3}Q}}(b_{i})
+m_{\widetilde{\frac{4}{3}Q}}(b_{i})-m_{\widetilde{Q}}(b_{i})$$ for
$i=1,\cdot\cdot\cdot,k$. By Lemma 4.1, we obtain
\begin{equation}\int_{B}\prod_{i=1}^{k}|b_{i}(y)-m_{\widetilde{B}}(b_{i})|^{ss'}d\mu(y)\leq C||\vec{b}||_{\ast}^{ss'}
\mu(6B).\end{equation}
 Take $s=\sqrt{r}$ and let $1/t=1/s-\alpha/n$.
By Theorem 1.1, the H\"{o}lder's inequality and (4.9), we deduce
\begin{equation}
\begin{split} II_{1}&\leq
\frac{\mu(B)^{1-1/t}}{\mu(6B)}\left|\left|I_{\alpha}\left(\prod_{i=1}^{k}\left[
b_{i}-m_{\tilde{B}}(b_{i})\right]f\chi_{\frac{6}{5}B}\right)\right|\right|_{L^{t}(\mu)}\\
&\leq
C\frac{\mu(B)^{1-1/t}}{\mu(6B)}\left|\left|\prod_{i=1}^{k}\left[
b_{i}-m_{\tilde{B}}(b_{i})\right]f\chi_{\frac{6}{5}B}\right|\right|_{L^{s}(\mu)}\\
&\leq C\left(\frac{1}{\mu(6B)}\int_{\frac{6}{5}B}\prod_{i=1}^{k}
\left|b_{i}(y)-m_{\tilde{B}}(b_{i})\right|^{ss'}d\mu(y)\right)^{\frac{1}{ss'}}\\
&\ \ \times\left(\frac{1}{\mu(6B)^{1-\frac{\alpha r}{n}}}\int_{\frac{6}{5}B}|f(y)|^{r}d\mu(y)\right)^{\frac{1}{r}}\\
&\leq C||\vec{b}||_{\ast}M^{(\alpha)}_{r,(5)}f(x).
\end{split}
\end{equation}

From the H\"{o}lder's inequality and Lemma 4.1, it follows that
\begin{equation}
\begin{split} II_{2}& \leq \sum_{i=1}^{k}\sum_{\sigma\in
C_{i}^{k}}\left(\frac{1}{\mu(6B)}\int_{B}\left|\left[b(y)-m_{\tilde{B}}(b)\right]_{\sigma}\right|^{r'}d\mu(y)
\right)^{\frac{1}{r'}}\\
&\ \ \times\left(\frac{1}{\mu(6B)}\int_{B}|I_{\alpha,\vec{b}_{\sigma'}}f(y)|^{r}d\mu(y)\right)^{\frac{1}{r}}\\
&\leq C \sum_{i=1}^{k}\sum_{\sigma\in
C_{i}^{k}}||\vec{b}_{\sigma}||_{\ast}M_{r,(6)}(I_{\alpha,\vec{b}_{\sigma'}}f)(x)\\
&=C \sum_{i=1}^{k-1}\sum_{\sigma\in
C_{i}^{k}}||\vec{b}_{\sigma}||_{\ast}M_{r,(6)}(I_{\alpha,\vec{b}_{\sigma'}}f)(x)+C||b||_{\ast}M_{r,(6)}(I_{\alpha}f)(x).
\end{split}
\end{equation}

Let us estimate $II_{3}$. For $y,\ y_{0}\in B$, by the condition
(ii) in Definition 1.6 and the H\"{o}lder's inequality, we have
\begin{equation}
\begin{split} &\biggl|I_{\alpha}
\left(\prod_{i=1}^{k}\left[b_{i}-m_{\tilde{B}}(b_{i})\right]f\chi_{X\backslash\frac{6}{5}B}\right)(y)\\
&-I_{\alpha}
\left(\prod_{i=1}^{k}\left[b_{i}-m_{\tilde{B}}(b_{i})\right]f\chi_{X\backslash\frac{6}{5}B}\right)(y_{0})\biggr|\\
\leq
&C\int_{X\backslash\frac{6}{5}B}\frac{d(y,y_{0})^{\epsilon}}{d(y,z)^{\epsilon}\lambda(y,d(y,z))^{1-\frac{\alpha}{n}}}\prod_{i=1}^{k}
\left|b_{i}(z)-m_{\widetilde{B}}(b_{i})\right||f(z)|d\mu(z)\\
\leq &C\sum_{j=1}^{\infty}\int_{6^{k}\frac{6}{5}B}
6^{-j\epsilon}\frac{1}{\lambda(y,r_{6^{j}\frac{6}{5}B})^{1-\frac{\alpha}{n}}}\prod_{i=1}^{k}
\biggl(\left|b_{i}(z)-m_{\widetilde{6^{j}\frac{6}{5}B}}(b_{i})\right|\\
&\ \ +\biggl|m_{\widetilde{6^{j}\frac{6}{5}B}}(b_{i})-m_{\tilde{B}}(b_{i})\biggr|\biggr)|f(z)|d\mu(z)\\
\leq &C\sum_{j=1}^{\infty}\sum_{i=0}^{k}\sum_{\sigma\in
C_{i}^{k}}6^{-j\epsilon}j^{k-i}||\vec{b}_{\sigma'}||_{\ast}\\&\ \
\times\frac{1}{\mu(5\times6^{j}\frac{6}{5}B
)^{1-\alpha/n}}\int_{6^{j}\frac{6}{5}B}
\left|\left[b(z)-m_{\widetilde{6^{j}\frac{6}{5}B}}(b_{i})\right]_{\sigma}\right||f(z)|d\mu(z)\\
\leq &C\sum_{i=0}^{k}\sum_{\sigma\in
C_{i}^{k}}\sum_{j=1}^{\infty}6^{-j\epsilon}j^{k-i}||\vec{b}_{\sigma}||_{\ast}||\vec{b}_{\sigma'}||_{\ast}M^{(\alpha)}_{r,(5)}f(x)\\
\leq &C||\vec{b}||_{\ast}M^{(\alpha)}_{r,(5)}f(x).\\
\end{split}
\end{equation}

From the above estimate and the definition of $h_{B}$, we have
\begin{equation}
\begin{split}&\left|I_{\alpha}
\left(\prod_{i=1}^{k}\left[b_{i}-m_{\tilde{B}}(b_{i})\right]f\chi_{X\backslash\frac{6}{5}B}\right)(y)-h_{B}\right|\\
=&\biggl|I_{\alpha}
\left(\prod_{i=1}^{k}\left[b_{i}-m_{\tilde{Q}}(b_{i})\right]f\chi_{X\backslash\frac{6}{5}B}\right)(y)\\
&-m_{B}\left[I_{\alpha}
\left(\prod_{i=1}^{k}\left[b_{i}-m_{\tilde{Q}}(b_{i})\right]f\chi_{X\backslash\frac{6}{5}B}\right)\right]\biggr|\\
\leq& C||\vec{b}||_{\ast}M^{(\alpha)}_{r,(5)}f(x).\\
\end{split}
\end{equation}
Then
$$II_{3}\leq C||\vec{b}||_{\ast}M^{(\alpha)}_{r,(5)}f(x).$$
The estimate for $II_{1}$, $II_{2}$ and $II_{3}$ yields (4.5).

  Now we turn to the estimate for (4.6). For any balls
$B\subset Q$ with $x\in B$ and $Q$ is a doubling ball, we denote
$N_{B,Q}+1$ simply by $N$.

\begin{equation}
\begin{split} &\biggl| m_{B}\biggl[I_{\alpha}
\left(\prod_{i=1}^{k}\left[b_{i}-m_{\tilde{B}}(b_{i})\right]f\chi_{X\backslash\frac{6}{5}B}\right)\biggr]\\
&- m_{Q}\left[I_{\alpha}
\left(\prod_{i=1}^{k}\left[b_{i}-m_{Q}(b_{i})\right]f\chi_{X\backslash\frac{6}{5}Q}\right)\right] \biggr|\\
\leq&\biggl| m_{Q}\left[I_{\alpha}
\left(\prod_{i=1}^{k}\left[b_{i}-m_{\tilde{B}}(b_{i})\right]f\chi_{X\backslash6^{N}B}\right)\right]\\
&-m_{B}\left[I_{\alpha}
\left(\prod_{i=1}^{k}\left[b_{i}-m_{\tilde{B}}(b_{i})\right]f\chi_{X\backslash6^{N}B}\right)\right]\biggr|\\
+&\biggl|m_{Q}\left[I_{\alpha}
\left(\prod_{i=1}^{k}\left[b_{i}-m_{Q}(b_{i})\right]f\chi_{X\backslash6^{N}B}\right)\right]\\
&-m_{Q}\left[I_{\alpha}
\left(\prod_{i=1}^{k}\left[b_{i}-m_{\tilde{B}}(b_{i})\right]f\chi_{X\backslash6^{N}B}\right)\right]\biggr|\\
+&\left|m_{B}\left[I_{\alpha}
\left(\prod_{i=1}^{k}\left[b_{i}-m_{\tilde{B}}(b_{i})\right]f\chi_{{6^{N}B
\backslash\frac{6}{5}B}}\right)\right]\right|\\
+&\left|m_{Q}\left[I_{\alpha}
\left(\prod_{i=1}^{k}\left[b_{i}-m_{Q}(b_{i})\right]f\chi_{{6^{N}B
\backslash\frac{6}{5}Q}}\right)\right]\right|\\
=:& JJ_{1}+JJ_{2}+JJ_{3}+JJ_{4}.
\end{split}
\end{equation}

With the similar  estimate for $II_{3}$, we easily get that
$$JJ_{1}\leq C||\vec{b}||_{\ast}M^{(\alpha)}_{r,(5)}f(x).$$

To estimate $JJ_{2}$, with the help of (4.7), we deduce that
\begin{equation}
\begin{split} &\biggl|I_{\alpha}
\left(\prod_{i=1}^{k}\left[b_{i}-m_{Q}(b_{i})\right]f\chi_{X\backslash6^{N}B}\right)(y)\\
&-I_{\alpha}
\left(\prod_{i=1}^{k}\biggl[b_{i}-m_{\tilde{B}}(b_{i})\biggr]f\chi_{X\backslash6^{N}B}\right)(y)\biggr|\\
=&\biggl|I_{\alpha}\left(\prod_{i=1}^{k}\left[b_{i}-m_{Q}(b_{i})\right]f\chi_{X\backslash6^{N}B}\right)(y)\\
&\ -\sum_{i=0}^{k}\sum_{\sigma\in
C_{i}^{k}}\left[m_{Q}(b)-m_{\tilde{B}}(b)\right]_{\sigma'}I_{\alpha}\left([b-m_{Q}(b)]_{\sigma}f\chi_{X\backslash6^{N}B}\right)(y)\biggr|\\
\leq &C\sum_{i=0}^{k-1}\sum_{\sigma\in
C_{i}^{k}}(K_{B,Q})^{k-i}||\vec{b}_{\sigma'}||_{\ast}\left|I_{\alpha}\left([b-m_{Q}(b)]_{\sigma}f\chi_{X\backslash6^{N}B}\right)(y)\right|\\
\leq &C\sum_{i=0}^{k-1}\sum_{\sigma\in
C_{i}^{k}}(K_{B,Q})^{k-i}||\vec{b}_{\sigma'}||_{\ast}\biggl\{I_{\alpha}\left([b-m_{Q}(b)]_{\sigma}f\right)(y)\\
&\ \ +I_{\alpha}\left([b-m_{Q}(b)]_{\sigma}f\chi_{6^{N}B}\right)(y)\biggr\}\\
\leq &C\sum_{i=0}^{k-1}\sum_{\sigma\in
C_{i}^{k}}(K_{B,Q})^{k-i}||\vec{b}_{\sigma'}||_{\ast}\biggl\{\sum_{j=0}^{i}\sum_{\eta\in
C_{j}^{i}}|[b(y)-m_{Q}(b)]_{\eta'}|I_{\alpha,\vec{b}_{\eta}}f(y)|\\
&+\biggl|I_{\alpha}\left([b-m_{Q}(b)]_{\sigma}f\chi_{6^{N}B\backslash\frac{6}{5}Q}\right)(y)\biggr|\\
&+\biggl|I_{\alpha}\left([b-m_{Q}(b)]_{\sigma}f\chi_{\frac{6}{5}Q}\right)(y)\biggr|\biggr\}.\\
\end{split}
\end{equation}
Using the H\"{o}lder's inequality and the fact that $Q$ is a
doubling ball, it follows that
\begin{equation}
\begin{split}
&\frac{1}{\mu(Q)}\int_{Q}|[b(y)-m_{Q}(b)]_{\eta'}||I_{\alpha,\vec{b}_{\eta}}f(y)|d\mu(y)\\
\leq
&C\frac{\mu(6Q)}{\mu(Q)}\biggl[\frac{1}{\mu(6Q)}\int_{Q}|[b(y)-m_{Q}(b)]_{\eta'}|^{r'}d\mu(y)\biggr]^{1/r'}\\
&\
\times\biggl[\frac{1}{\mu(6Q)}\int_{Q}|I_{\alpha,\vec{b}_{\eta}}f(y)|^{r}d\mu(y)\biggr]^{1/r}\\
\leq
&C||\vec{b}_{\eta'}||_{\ast}M_{r,(6)}(I_{\alpha,\vec{b}_{\eta}}f)(x).
\end{split}
\end{equation}
By the H\"{o}lder's inequality, Lemma 4.1 and the condition (i) in
Definition 1.6, it is easy to see that for $y\in Q$,
\begin{equation}
\begin{split}
&\left|I_{\alpha}\left([b-m_{Q}(b)]_{\sigma}f\chi_{6^{N}B\backslash\frac{6}{5}Q}\right)(y)\right|\\
\leq
&C\int_{6^{N}B\backslash\frac{6}{5}Q}\frac{1}{\lambda(y,d(y,z))^{1-\alpha/n}}|[b(z)-m_{Q}(b)]_{\sigma}||f(z)|d\mu(z)\\
\leq& C\frac{1}{\mu(5\times 6^{N}B)^{1-\alpha/n}}\int_{6^{N}B}|[b(z)-m_{Q}(b)]_{\sigma}||f(z)|d\mu(z)\\
\leq& C\left[\frac{1}{\mu(5\times
6^{N}B)}\int_{6^{N}B}|[b(z)-m_{Q}(b)]_{\sigma}|^{r'}d\mu(z)\right]^{\frac{1}{r'}}\\
&\ \ \times\left[\frac{1}{\mu(5\times
6^{N}B)^{1-\frac{\alpha r}{n}}}\int_{6^{N}B}|f(z)|^{r}d\mu(z)\right]^{\frac{1}{r}}\\
\leq& C||\vec{b}_{\sigma}||_{\ast}M^{(\alpha)}_{r,(5)}f(x).
\end{split}
\end{equation}
Taking the mean over $y\in Q$, it easily obtains that
\begin{equation}
m_{Q}\left[\left|I_{\alpha}\left([b-m_{Q}(b)]_{\sigma}f\chi_{6^{N}B\backslash\frac{6}{5}Q}\right)\right|\right]
\leq C||\vec{b}_{\sigma}||_{\ast}M^{(\alpha)}_{r,(5)}f(x).
\end{equation}
With the similar method to estimate $II_{1}$, we deduce
\begin{equation}
m_{Q}\left[\left|I_{\alpha}\left([b-m_{Q}(b)]_{\sigma}f\chi_{\frac{6}{5}Q}\right)\right|\right]
\leq C||\vec{b}_{\sigma}||_{\ast}M^{(\alpha)}_{r,(5)}f(x).
\end{equation}
Therefore,
\begin{equation}
\begin{split}
JJ_{2}\leq& C (K_{B,Q})^{k}\biggl[\sum_{i=0}^{k-1}\sum_{\sigma\in
C_{i}^{k}}||\vec{b}_{\sigma}||_{\ast}M_{r,(6)}(I_{\alpha,\vec{b}_{\sigma}}f)(x)
+||\vec{b}||_{\ast}M^{(\alpha)}_{r,(5)}f(x)\biggr]\\
=& C (K_{B,Q})^{k}\biggl[\sum_{i=1}^{k-1}\sum_{\sigma\in
C_{i}^{k}}||\vec{b}_{\sigma}||_{\ast}M_{r,(6)}(I_{\alpha,\vec{b}_{\sigma}}f)(x)\\
&\ \
+||\vec{b}||_{\ast}\biggl(M_{r,(6)(I_{\alpha}f)(x)}+M^{(\alpha)}_{r,(5)}f(x)\biggr)\biggr].
\end{split}
\end{equation}

Now we turn to estimate $JJ_{3}$.  By the H\"{o}lder's inequality,
Lemma 4.1 and Lemma 4.2, for $y\in B$, we get
\begin{equation}
\begin{split}
&\left|I_{\alpha}(\prod^{k}_{i=1}[b_{i}-m_{\tilde{B}}(b_{i})]f\chi_{6^{N}B\backslash\frac{6}{5}B})(y)\right|\\
\leq &
C\sum_{j=1}^{N-1}\frac{1}{\lambda(y,r_{6^{j}B})^{1-\frac{\alpha}{n}}}\int_{6^{j+1}B}
\prod_{i=1}^{k}|b_{i}(z)-m_{\tilde{B}}(b_{i})||f(z)|d\mu(z)\\
&\ +\frac{C}{\lambda(y,r_{B})^{1-\frac{\alpha}{n}}}\int_{6B\backslash\frac{6}{5}B}\prod_{i=1}^{k}|b_{i}(z)-m_{\tilde{B}}(b_{i})||f(z)|d\mu(z)\\
\leq &C \sum^{N-1}_{j=1}\biggl[\frac{\mu(5\times
6^{j+1}B)}{\lambda(y,r_{5\times
6^{j+1}B})}\biggr]^{1-\frac{\alpha}{n}}\biggl[\frac{1}{\mu(5\times
6^{j+1}B)^{1-\frac{\alpha
r}{n}}}\int_{6^{j+1}B}|f(z)|^{r}d\mu(z)\biggr]^{1/r}\\
&\ \times\biggl[\frac{1}{\mu(5\times
6^{j+1}B)}\int_{6^{j+1}B}\prod_{i=1}^{k}|b_{i}(z)-m_{\widetilde{6^{j+1}B}}(b_{i})\\
&\ \ +m_{\widetilde{6^{j+1}B}}(b_{i})-m_{\tilde{B}}(b_{i})|^{r'}d\mu(z)\biggr]^{1/r'}\\
&\
+C\biggl[\frac{1}{\mu(5\times6B)}\int_{6B}\prod_{i=1}^{k}|b_{i}(z)-m_{\tilde{B}}(b_{i})|^{r'}d\mu(z)\biggr]^{1/r'}\\
&\ \times\biggl[\frac{1}{\mu(5\times6B)^{1-\frac{\alpha r}{n}}}\int_{6B}|f(z)|^{r}d\mu(z)\biggr]^{1/r}\\
\leq & CK^{(\alpha)}_{B,Q}(K_{B,Q})^{k}||\vec{b}||_{\ast}M^{(\alpha)}_{r,(5)}f(x)+C||\vec{b}||_{\ast}M^{(\alpha)}_{r,(5)}f(x)\\
\leq &
CK^{(\alpha)}_{B,Q}(K_{B,Q})^{k}||\vec{b}||_{\ast}M^{(\alpha)}_{r,(5)}f(x).
\end{split}
\end{equation}
Taking the mean over $y\in Q$, we obtain
$$JJ_{3}\leq CK^{(\alpha)}_{B,Q}(K_{B,Q})^{k}||\vec{b}||_{\ast}M^{(\alpha)}_{r,(5)}f(x).$$

Finally, we estimate $JJ_{4}$. For $y\in Q$,
\begin{equation}
\begin{split}
&\biggl|I_{\alpha}(\prod_{i=1}^{k}[b_{i}-m_{Q}(b_{i})]f\chi_{6^{N}B\backslash\frac{6}{5}Q})(y)\biggr|\\
\leq &C
\int_{6^{N}B\backslash\frac{6}{5}Q}\frac{1}{\lambda(y,d(y,z))^{1-\frac{\alpha}{n}}}\prod_{i=1}^{k}|b_{i}(z)-m_{Q}(b_{i})||f(z)|d\mu(z)\\
\leq&C \biggl[\frac{1}{\mu(5\times 6^{N}B)}\int_{6^{N}B}\prod_{i=1}^{k}|b_{i}(y)-m_{B}(b_{i})|^{r'}d\mu(y)\biggr]^{1/r'}\\
&\ \ \times\biggl[\frac{1}{\mu(5\times 6^{N}B)^{1-\frac{\alpha r}{n}}}\int_{6^{N}B}|f(y)|^{r}d\mu(y)\biggr]^{1/r}\\
\leq&C||\vec{b}||_{\ast}M^{(\alpha)}_{r,(5)}f(x).
\end{split}
\end{equation}
Taking the mean over $y\in Q$, it follows that
$$JJ_{4}\leq C||\vec{b}||_{\ast}M^{(\alpha)}_{r,(5)}f(x).$$
The estimate for $JJ_{1}$, $JJ_{2}$, $JJ_{3}$ and $JJ_{4}$ yields
(4.6). Thus the proof of Theorem 1.3 is completed.
\end{proof}


%

\end{document}